\newcommand{\bu}{{\mathbf u}}
\newcommand{\R}{{\Bbb R}}
\newcommand{\what}{\widehat}
\newcommand{\wtilde}{\widetilde}
\newtheorem{definition}{Definition}
\newtheorem{theorem}{Theorem}
\newtheorem{lemma}{Lemma}
\newenvironment{proof}{\begin{list}{$\!\!${\bf Proof} \rule{1pt}{0pt}}
{\setlength{\leftmargin}{0pt}\setlength{\itemindent}{30pt}\setlength{\listparindent}
{15pt}}\item}{\rule{0.3em}{0mm}\hfill\framebox[1.2ex]{\rule{0.3em}{0mm}}
\end{list}}
\title{Preconditioning for Accurate Solutions of Linear Systems and Eigenvalue Problems
\footnote{
2010 Mathematics Subject Classification: 65F08, 65F15, 65F35, 65G50.
Key words: Preconditioning; ill-conditioned linear systems; accuracy; error analysis; eigenvalue.}
}
\author{Qiang Ye\thanks{Department of Mathematics, University of Kentucky,
Lexington, KY 40506. {\tt qye3@uky.edu}. Research supported in part by NSF Grants DMS-1317424, DMS-1318633 and DMS-1620082.
}
}
\date{}
\begin{document}

\maketitle

\begin{abstract}
This paper develops the preconditioning technique as a method to address the accuracy issue caused by ill-conditioning. Given a preconditioner $M$ for an ill-conditioned linear system $Ax=b$, we show that, if  the inverse of the preconditioner $M^{-1}$ can be applied to vectors {\em accurately}, then the linear system can be solved {\em accurately}. A stability concept called {\em inverse-equivalent} accuracy is introduced to describe higher accuracy that is achieved and an error analysis will be presented. As an application,  we  use the preconditioning approach to accurately compute a few smallest eigenvalues of certain ill-conditioned matrices. Numerical examples are presented to illustrate the error analysis and the performance of the methods.
\end{abstract}

%
%

\pagestyle{myheadings}
\thispagestyle{plain}

\section{Introduction}\label{sec:intro}

Solutions of large scale linear algebra problems are   typically associated with an ill-conditioned  matrix $A$ where the condition number  $\kappa (A):=\|A\|\|A^{-1}\|$ is large.
The ill-conditioning has two effects in numerically solving a linear system $Ax=b$. It reduces the rate of convergence of iterative algorithms such as the Krylov subspace methods. It also limits the accuracy to which $Ax=b$ can be solved in finite precision.
The former problem is typically addressed by a technique known as preconditioning. 
For the latter, there is no known good solution other than the classical diagonal scaling or iterative refinements; see   \cite[Sec. 2.5]{demmel97} and \cite[p.124]{gova}.

While a large condition number $\kappa (A)$ is typically associated with the two difficulties discussed above in solving linear systems, it also causes  two similar problems for eigenvalue computations. First,  a large $\kappa (A)$ is often  associated with a spectrum that has one or both ends clustered, 
which results in slow convergence for methods such as the Lanczos/Arnoldi algorithms.
A large $\kappa (A)$ also limits the accuracy of those smaller eigenvalues of $A$ computed in finite precision; see 
\cite{demmel99b,demmelkahan} or  \S \ref{sec:eigen} for some discussions.
The shift-and-invert transformation and its  variants  are efficient ways of dealing with clustering;
see \cite{bai} for example. The relative accuracy issue has also been studied extensively and several algorithms have been developed for various structured matrices
for which all singular values or eigenvalues can be computed to an  accuracy independent of the condition number; see   \cite{axy2,barlowdemmel,demmel99a,demmel99b,demmelkahan,demmelkoev,deve,doko,doko11,Dopico:09,drma14,fepa,li98,ye08} and the references contained therein.

The preconditioning technique is a  general methodology that has been highly successful in overcoming the effect of ill-conditioning on the speed of convergence of  iterative methods for solving a linear system $Ax=b$. Given an  invertible $M\approx A$, we implicitly transform  the linear system to the well-conditioned one,  $M^{-1}Ax=M^{-1}b$,    which can be solved iteratively with accelerated convergence. This poses the natural question: Do we also obtain a more accurate solution by solving the preconditioned  system $M^{-1}Ax=M^{-1}b$? The answer is generally no. Because $M$ is a good preconditioner to an ill-conditioned $A$, it is necessarily ill-conditioned and hence there are potentially large roundoff errors encountered in forming the preconditioned system either explicitly or implicitly; see \S \ref{sec:inv}  and \S \ref{sec:example} for more details and examples. On the other hand, if $M^{-1}Ax=M^{-1}b$ can be formed exactly or sufficiently accurately, solving that will clearly give an accurate solution.  Indeed, diagonal scaling is one such example  where $M$ is chosen to be a diagonal matrix of powers of $2$ so that no roundoff error is generated when applying $M^{-1}$. Thus, the  goal of this paper is to investigate to what accuracy inverting $M$ in preconditioning can lead to improved solution accuracy.

We will develop the preconditioning technique as a method to solve the accuracy issue caused by ill-conditioning for both linear systems and eigenvalue problems. We will show that preconditioning can indeed lead to highly satisfactory solution accuracy of a linear system if the inverse of the preconditioner, $M^{-1}$, can be applied sufficiently {\em accurately}. 
To study precisely the accuracy that is needed for $M^{-1}$  and that can be attained by the final solution, we will introduce a stability concept called {\em inverse-equivalent} accuracy, which is one equivalent to multiplying exact inverses.
An error analysis together with numerical examples will be presented to demonstrate the stability gained. While the present paper is focused on linear systems,  we will also use this accurate preconditioning method to accurately compute a few smallest eigenvalues of an ill-conditioned matrix through the  accurate inverse approach presented in \cite{ye17}.

We remark that the only requirement for the accurate preconditioning process is that $M$  be inverted with the {\em inverse-equivalent} accuracy. This can be done if $M^{-1}$ is explicitly available or $M$ has an accurate rank-revealing decomposition (see \cite{demmel99b,domo12}). In \cite{demmel99b}, several classes of structured matrices have been shown to have an accurate rank-revealing decomposition, which include graded matrices, total signed compound matrices such as acyclic matrices, Cauchy matrices, totally positive  matrices, diagonally scaled totally unimodular matrices, and matrices arising in certain simple finite element problems. We have also shown  in \cite{ye09} that diagonally dominant matrices have an accurate rank-revealing decomposition. Thus, the accurate preconditioning method is applicable to a broad class of matrices  that can be well preconditioned by any of these structured matrices.

The rest of the paper is organized as follows. We  present in \S \ref{sec:iea} the concept of {\em inverse-accurate} accuracy. We then develop in \S   \ref{sec:inv} the accurate preconditioning method and an error analysis for linear systems. In \S \ref{sec:eigen}, we discuss applying the accurate preconditioning method to accurately compute a few smallest eigenvalues of a matrix. Finally, in \S \ref{sec:example}, we present some numerical examples for both   linear systems  and eigenvalue problems, followed by some concluding remarks in \S \ref{sec:conclusion}.

\subsection{Notation and Preliminaries}\label{sec:pre}
Throughout this paper, $\| \cdot \|$ denotes a general norm for vectors and its induced operator norm for matrices.  $\| \cdot \|_p$ denotes the $p$-norm. Inequalities and absolute value involving matrices and vectors are entrywise.

For error analysis in a floating point arithmetic, $\bu$ denotes the machine roundoff unit and ${\cal O} (\bu)$ denotes a term bounded by $p(n)\bu$ for some polynomial $p(n)$ in $n$. We use $fl(z)$ to denote the computed result of an algebraic expression $z$. We  assume throughout that matrices and vectors given have floating point number entries.
We assume the following standard model for roundoff
errors in  basic matrix computations \cite[p.66]{gova}:
\begin{equation}
fl(x+y) =  x + y + e \quad \mbox{ with } \quad
|e| \le {\bf u} (| x + y |)
\label{eq:xpy}
\end{equation}
\begin{equation}
fl(A x) = A x +  e \quad \mbox{ with } \quad
|e| \le {\bf u} N |A| |x|  + {\cal O}({\bf u}^2),
\label{eq:dot}
\end{equation}
where $N$ is the maximal number of nonzero entries per row of $A$.
Using (2.4.12) of \cite[p.64]{gova} and equivalence of any two norms in a finite dimensional space, we may also simply rewrite (\ref{eq:dot}) as
\begin{equation}
\|fl(A x) - A x \| \le {\cal O} (\bu) N \|A\| \|x\|.
\label{eq:dot2}
\end{equation}
This bound is based on explicitly multiplying $A$ with $x$ and $N\le n$  can be absorbed into the ${\cal O} (\bu)$ term.  More generally, if $A$ is not explicitly given and $Ax$ is computed as an operator,  (\ref{eq:dot2}) may still be valid if we allow $N$ to be a suitable constant associated with the operator $Ax$.

\section{Inverse-equivalent  Accuracy}\label{sec:iea}
In this section, we introduce a stability concept called {\em inverse-equivalent} accuracy for solving linear systems in finite precision.

Given an invertible matrix $A\in \R^{n\times n}$  and $b \in R^{n}$, all standard dense algorithms for solving the linear system $Ax=b$ in a floating point arithmetic computes a solution $\hat x$ that is backward stable, i.e. it satisfies $(A+E) \hat x = b$ for some $E$ with $\|E\|/\|A\| = {\cal O} (\bu)$.
An iterative method computes a solution $\hat x$ with a residual that at best satisfies $\|b-A \hat x \|={\cal O} (\bu) \|A\| \|\hat x\|$, which is equivalent to the backward stability.  In both cases, the solution error is bounded as
\begin{equation}\label{eq:cond}
\frac{\|\hat x -x\|}{\|x\|} \le {\cal O} (\bu) \kappa (A), \;\;\mbox{ where }\;\; \kappa (A)= \|A\|  \|A^{-1}\|.
\end{equation}

This backward stable solution accuracy may be unsatisfactory for ill-conditioned problems, but for a general linear system, this is the best one may hope for because the solution is not well determined by the matrix $A$ under perturbations. For many ill-conditioned linear systems arising in applications, however, the underlying solution  may be much more stable when considering the solution as determined from the original problem data rather than from the matrix. For example,  discretization of a differential equation typically gives rise to an ill-conditioned linear system, but its solution, approximating the solution of PDE, is stably  determined by the input data of the PDE. Namely, the solution is  stable if we only consider perturbations to the problem data in spite of ill-conditioning of the matrix. In that case, we are interested in special algorithms that can solve such ill-conditioned linear systems  more accurately.

Before we study algorithms, we first address how high an accuracy  one may reasonably expect to achieve for a linear system. Ideally, we may strive for the full relative accuracy
\begin{equation}\label{eq:ideal}
\frac{\|\hat x -x\|}{\|x\|} \le {\cal O} (\bu)
\end{equation}
but a bound totally independent of $A$ will obviously require very stringent conditions on $A$, as a perturbation to $b$ alone will produce errors proportional to  $A^{-1}$. Note that $b$ typically corresponds to problem data and then some  perturbations/uncertainties in $b$ should be assumed. Furthermore, the ideal accuracy (\ref{eq:ideal}) may not be necessary in many applications.
Indeed, the accuracy we introduce now is often sufficient in applications.

\begin{definition}\label{def:1}
Given $A$, we say that an algorithm for solving linear systems with coefficient $A$ is inverse-equivalent if,  for any $b$, it  produces in a floating point arithmetic a computed solution $\hat x$  to $Ax=b$   such that
\begin{equation}\label{eq:invacc}
    \|\hat x -x\| \le {\cal O} (\bu)  \|A^{-1}\| \|b\|.
\end{equation}
We also say such a solution $\hat x$  has an inverse-equivalent accuracy.
\end{definition}

In the definition, we have used a general norm. Since any two norms are equivalent and ${\cal O} (\bu)$ can absorb any   constant, the definition is equivalent to one using any particular norm in (\ref{eq:invacc}). The next two results explain the naming of this accuracy.
\begin{theorem}\label{thm:invacc}
If $A$ is such that $A^{-1}$ is explicitly available, then solving $Ax=b$ by
multiplying $A^{-1}  $ with $b$ is an inverse-equivalent algorithm.
\end{theorem}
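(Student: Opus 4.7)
The plan is to unfold the definition of the algorithm and apply the standard floating-point bound (\ref{eq:dot2}) to the single matrix-vector product that it performs. Since $A^{-1}$ is, by hypothesis, explicitly available as a floating point matrix, the algorithm simply returns $\hat x = fl(A^{-1} b)$; meanwhile the exact solution is $x = A^{-1} b$. Thus the error is entirely a rounding error in one matrix-vector multiplication, which is precisely the situation covered by (\ref{eq:dot2}).

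Concretely, I would argue as follows. First, observe that $\hat x - x = fl(A^{-1} b) - A^{-1} b$. Next, apply (\ref{eq:dot2}) with the matrix $A^{-1}$ in the role of $A$ and $b$ in the role of $x$. This gives
\[
\|\hat x - x\| = \|fl(A^{-1} b) - A^{-1} b\| \le \mathcal{O}(\bu)\, N\, \|A^{-1}\|\, \|b\|,
\]
where $N$ is the maximal number of nonzero entries per row of $A^{-1}$ (in particular $N \le n$). Finally, since the polynomial factor $N$ can be absorbed into the $\mathcal{O}(\bu)$ notation, this bound reduces to $\|\hat x - x\| \le \mathcal{O}(\bu)\, \|A^{-1}\|\, \|b\|$, which is exactly (\ref{eq:invacc}). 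Hence the algorithm is inverse-equivalent per Definition \ref{def:1}.

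There is no real obstacle here; the proof is essentially a one-line application of the floating-point model once we recognize that having $A^{-1}$ explicitly reduces the whole solve to one matrix-vector product, so no additional ill-conditioning of $A$ enters the error bound. The only small point worth mentioning is the absorption of $N$ into $\mathcal{O}(\bu)$, which is already justified by the discussion following (\ref{eq:dot2}) in \S \ref{sec:pre}.
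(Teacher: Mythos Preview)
Your argument is correct and matches the paper's approach. The only refinement in the paper is that it does not assume $A^{-1}$ itself has floating-point entries (the standing assumption in \S\ref{sec:pre} covers only $A$ and $b$): it therefore first bounds the representation error $|fl(A^{-1})-A^{-1}|\le\bu\,|A^{-1}|$, then applies (\ref{eq:dot}) to the stored matrix $fl(A^{-1})$, and combines the two pieces by the triangle inequality---an extra step that is absorbed into the same $\mathcal{O}(\bu)\|A^{-1}\|\|b\|$ bound.
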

\begin{proof}
Recall that $A$ and $b$ are assumed to have floating point number entries. For $A^{-1}$, we have $|fl(A^{-1}) - A^{-1}| \le \bu |A^{-1}|$. Then $\|fl(A^{-1})b - A^{-1}b\| \le {\cal O} (\bu) \|A^{-1}\| \|b\|$. It follows from (\ref{eq:dot}) that  $\|fl(A^{-1} b) - fl(A^{-1}) b\| \le  {\cal O} (\bu) \|fl(A^{-1})\| \|b\|.$ Combining the two, we obtain  $\|fl(A^{-1} b) - A^{-1} b\| \le {\cal O} (\bu)  \|A^{-1}\| \|b\|.$
\end{proof}

\begin{theorem}\label{thm:invaccA}
Let  $A$ be an invertible matrix. There is an inverse-equivalent algorithm for $A$
if and only if  the inverse $A^{-1}$  can be computed by some algorithm with a relative error of order ${\cal O}(\bu)$, i.e.  the computed inverse  $\what X$  
satisfies
\begin{equation}\label{eq:invA}
\frac{\|\what X -A^{-1}\|}{\|A^{-1}\|} \le {\cal O} (\bu)
\end{equation}
\end{theorem}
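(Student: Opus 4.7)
The plan is to prove the equivalence in the standard iff fashion, with the easier direction coming first and relying on the same ingredients as the proof of Theorem~\ref{thm:invacc}, and the reverse direction using a column-by-column trick where one reconstructs an approximate inverse by applying the hypothesized inverse-equivalent algorithm to the standard basis vectors.

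For the forward direction, I would assume a computed inverse $\what X$ satisfying $\|\what X - A^{-1}\| \le \mathcal{O}(\bu)\|A^{-1}\|$ is available, and I would show that the algorithm that first forms $\what X$ and then computes $fl(\what X b)$ is inverse-equivalent. This essentially recycles the argument of Theorem~\ref{thm:invacc}: bound $(\ref{eq:dot2})$ gives $\|fl(\what X b) - \what X b\| \le \mathcal{O}(\bu)\|\what X\|\|b\|$, and the hypothesis yields $\|\what X\| \le (1+\mathcal{O}(\bu))\|A^{-1}\|$ as well as $\|\what X b - A^{-1}b\| \le \mathcal{O}(\bu)\|A^{-1}\|\|b\|$. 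Adding the two inequalities and absorbing constants into $\mathcal{O}(\bu)$ produces $\|fl(\what X b) - A^{-1}b\| \le \mathcal{O}(\bu)\|A^{-1}\|\|b\|$, which is (\ref{eq:invacc}).

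For the reverse direction, I would fix an inverse-equivalent algorithm for $A$ and apply it, for each $j=1,\ldots,n$, to the right-hand side $b=e_j$. The definition guarantees that the computed solution $\what x_j$ satisfies $\|\what x_j - A^{-1} e_j\| \le \mathcal{O}(\bu)\|A^{-1}\|\|e_j\|$. Assembling these vectors column-wise into $\what X := [\what x_1, \ldots, \what x_n]$, each column of $\what X - A^{-1}$ has norm bounded by $\mathcal{O}(\bu)\|A^{-1}\|$, so a dimension-dependent constant times this bound controls any fixed matrix norm of $\what X - A^{-1}$; by the norm-equivalence convention stated in Section~\ref{sec:pre}, this constant is absorbed into $\mathcal{O}(\bu)$ and gives (\ref{eq:invA}).

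The one subtle point, and essentially the only place care is needed, is the transition from column-wise error bounds to a matrix-norm bound in the reverse direction. The argument hinges on the convention (explicitly adopted earlier in the paper) that norms are interchangeable up to factors absorbed into $\mathcal{O}(\bu)$, and on the fact that $\|e_j\|$ is bounded by a norm-dependent constant. Once this convention is invoked, both directions reduce to bookkeeping of already-established estimates, so no genuinely new machinery is required.
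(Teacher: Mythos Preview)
Your proposal is correct and follows essentially the same approach as the paper: one direction applies the inverse-equivalent algorithm to the columns $e_j$ of the identity and assembles the results into $\what X$, while the other bounds $\|fl(\what X b) - A^{-1}b\|$ by a triangle-inequality split using (\ref{eq:dot2}) and the hypothesis on $\what X$. The only cosmetic difference is that the paper handles the column-to-matrix step concretely by working in the $1$-norm (where $\|e_j\|_1 = 1$ and the matrix $1$-norm equals the maximum column $1$-norm), then invokes norm equivalence, whereas you invoke norm equivalence more abstractly; both are valid.
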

\begin{proof}
First assume that there is an inverse-equivalent algorithm for $A$. Using this algorithm to compute the inverse $A^{-1}$ by solving $AX=I$, let the computed inverse be $\what X =[\hat x_1, \hat x_2, \cdots, \hat x_n]$ and write $X=A^{-1}=[ x_1,  x_2, \cdots,  x_n]$. Then $\hat x_i$ is inverse-equivalent, i.e., written in the 1-norm, $\|\hat x_i - x_i \|_1 \le {\cal O} (\bu) \|A^{-1}\|_1 \|e_i\|_1 = {\cal O} (\bu) \|A^{-1}\|_1$. Thus $\|\what X - X\|_1 = \max_i \|\hat x_i - x_i \|_1 \le {\cal O} (\bu) \|A^{-1}\|_1 $. By equivalence of norms, (\ref{eq:invA}) is proved.

On the other hand, if we have an algorithm that computes the inverse $\what X$ satisfies  (\ref{eq:invA}), then for any $b$, solving $Ax=b$ by computing $\hat x = fl(\what X b)$, we have
\begin{eqnarray*}
\|\hat x - x\| 
&\le& \|fl(\what X b) - \what X b \|+ \|\what X b -A^{-1} b\| \\
&\le&  {\cal O} (\bu) \|\what X\| \|b\|   +   {\cal O} (\bu) \|A^{-1}\| \| b\| \\
&\le&  {\cal O} (\bu) ( \|\what X-A^{-1} \| +\|A^{-1} \|) \|b\|   +   {\cal O} (\bu) \|A^{-1}\| \| b\| \\
&\le&  {\cal O} (\bu) ( {\cal O} (\bu) \|A^{-1}\|  +\|A^{-1} \|) \|b\|   +   {\cal O} (\bu) \|A^{-1}\| \| b\| \\
 &=&  {\cal O} (\bu) \|A^{-1}\| \| b\|.
\end{eqnarray*}
So the algorithm  $\hat x = fl(\what X b)$ is inverse-equivalent. This completes the proof.
\end{proof}

The above shows that an inverse-equivalent algorithm produces solution that are comparable to the one obtained by multiplying the exact inverse with the right-hand side vector $b$. This should be highly satisfactory in many applications. For example, in eigenvalue computations with the shift-and-invert transformation, using an inverse-equivalent algorithm for the inverse would produce results as accurate as the one obtained using the exact inverse; see  \S \ref{sec:eigen}.

If we rewrite (\ref{eq:invacc}) in the relative error form
\begin{equation}\label{eq:invacc-rel}
    \frac{\|\hat x -x\|}{\|x\|} \le {\cal O} (\bu)  \frac{\|A^{-1}\| \|b\|}{\|x\|},
\end{equation}
then it is clear that this accuracy is between the full relative accuracy  (\ref{eq:ideal}) and the backward stable solution accuracy (\ref{eq:cond}) as $ \|x\| \le \|A^{-1}\| \|b\| \le \|A^{-1}\| \|A\| \|x\|$. Note that the bound (\ref{eq:invacc-rel}) has also appeared in the study of perturbation theory for $Ax=b$ when only the right-hand side vector $b$ is perturbed; see \cite{chan88,high02}.
It has been observed that the bound  (\ref{eq:invacc-rel})  may be substantially smaller than  (\ref{eq:cond}); see \cite{chan88,domo12}. For example, this occurs as long as $b$ has a significant projection on some right singular vector $u_k$ of $A$ corresponding to a singular value $\sigma_k$ that is far less than the largest one. Namely, if $\sigma_1 \ge \sigma_2 \ge \ldots \ge \sigma_n$ are the singular values of $A$, then $\|x\|_2=\|A^{-1} b\|_2 \ge |u_k^T b| /\sigma_k$ and hence
\begin{equation}\label{eq:ll}
\frac{\|A^{-1}\|_2 \|b\|_2}{\|x\|_2} \le \frac{\sigma_k}{\sigma_n} \frac{  \|b\|_2}{|u_k^T b|} \ll \|A\|_2  \|A^{-1}\|_2
\end{equation}
if
\begin{equation}\label{eq:bcond}
\frac{\sigma_k }{ \cos \angle (b, u_k)} \ll \sigma_1.
\end{equation}
See \cite{chan88,domo12} for some more detailed discussions.

We remark that  $b$, being the input data in a practical problem,  is unlikely to be nearly orthogonal to all singular vectors corresponding to smaller singular values. For example,  if $b$ is a random vector, (\ref{eq:bcond}) may be easily satisfied. So we may expect the inverse-equivalent accuracy (\ref{eq:invacc-rel}) to be significantly better than the backward stable one  (\ref{eq:cond}) when $b$ is chosen with no   constraint.

\section{Accurate  solutions for linear systems}\label{sec:inv}
In this section, we present an accurate preconditioning method for solving a linear system  where the inversion of the preconditioner is computed with an inverse-equivalent algorithm. We show that this results in inverse-equivalent accuracy and we present our analysis in two subsections, one for direct methods  and one for iterative ones for solving the preconditioned  equation. We first briefly discuss the accuracy that may be expected when  a standard backward stable algorithm is used for the preconditioner.

Preconditioning a linear system $Ax =b$ is commonly used to accelerate convergence of an iterative method. Given a preconditioner $M \approx A$ such that $M^{-1}A$ is well-conditioned, applying an iterative method to $M^{-1}Ax=M^{-1}b$ results in accelerated convergence. Since $M^{-1}Ax=M^{-1}b$  is a well-conditioned system, it might be argued that solving the preconditioned equation
should produce more accurate solutions. However, inverting $M$ encounters roundoff errors which  change the preconditioned system and the final solution. We analyze this error as follows.

First, we observe that for $M^{-1}A$ to be well-conditioned, $M$ is necessarily  ill-conditioned (i.e. has a   condition number comparable to $A$). This is because
\begin{equation}\label{eq:ma}
 \frac{\kappa (A)}{\kappa (M^{-1}A) }\le \kappa (M) \le \kappa (M^{-1}A)  \kappa (A).
\end{equation}
Then the application of $M^{-1}$ on $A$ and on $b$ can not be computed accurately.
For example, assuming $M$ is inverted by a backward stable algorithm, the computed result of the right-hand side $M^{-1}b$ is $M^{-1}b +f$ with the error $f$ bounded by $\|f\|/\|M^{-1}b\| = {\cal O}( \bu) \kappa(M)$. Similarly, the  computed result of $M^{-1}A$ is $M^{-1}A +E$ with $\|E\|/\|M^{-1}A\| = {\cal O}( \bu) \kappa(M)$.
Thus, the preconditioned system obtained is
\begin{equation}\label{eq:precon}
(M^{-1}A+E) y = M^{-1}b +f,
\end{equation}
and then  even its exact solution $y$ can only be bounded as
\begin{equation}\label{eq1}
\frac{\|y - x\|}{\|x\|}
\le {\cal O}(\bu) \kappa(M) \kappa (M^{-1}A)
\end{equation}
which  by (\ref{eq:ma}) is approximately ${\cal O}(\bu) \kappa(A)$.
We conclude that the computed solution to  $ M^{-1}Ax=M^{-1}b$, after accounting the errors of inverting $M$, has a relative error of order $\bu \kappa(A)$.
So, the solution accuracy  can not be improved by  preconditioning in general; see numerical examples in \S \ref{sec:example}.

Note that the discussion above is for a general $M$ solved by a backward stable algorithm. The diagonal scaling, where $M$ is chosen to be a diagonal matrix (typically with entries being powers of $2$) \cite[p.124]{gova}, is an effective method for improving solution accuracy provided the diagonal matrix is a good preconditioner. With such a preconditioner, the preconditioning transformation  is  performed exactly and the resulting solution accuracy is indeed improved. This  leads us to the following questions: Can  more accurately inverting $M$  lead to a more accurate solution of the original system, and if so, what accuracy is needed for $M^{-1}$? The rest of this section provides answers to these questions.


%
%


Let $A=M+K$ where $K$ is small in norm and $M$ is such that there is an inverse-equivalent algorithm for inverting $M$. Then using $M$ as a preconditioner,
we form the preconditioned system 
\begin{equation}\label{eq:newp}
Bx = c, \;\;\mbox{ where }\;\; B:=I+M^{-1}K, \;\; c:=M^{-1}b.
\end{equation}
This system may be formed explicitly or implicitly depending on whether we solve it by a direct or an  iterative method respectively, but it is important that $B$ or its product with vectors is formed in the way as given in (\ref{eq:newp}).  We   call this process accurate preconditioning and we will show that solving the well-conditioned system  (\ref{eq:newp}) by any backward stable algorithm leads to an inverse-equivalent accurate solution (\ref{eq:invacc}). Namely,
{\em
accurate preconditioning with an inverse-equivalent algorithm for inverting $M$ is an inverse-equivalent algorithm for $A$.}

The following two subsections provide  detailed analysis by considering  solving (\ref{eq:newp}) first using a direct method and then using an iterative one.

\subsection{Direct Method for Preconditioned Systems}
We consider forming (\ref{eq:newp}) explicitly and then solving it by a backward stable   direct method such as the Gaussian elimination with partial pivoting. In this regard, we first need to compute $M^{-1}K$ column by column by solving $n$ linear systems. Assume that these linear systems are solved by an inverse-equivalent algorithm for $M$. Then, each column of the computed result of $M^{-1}K$ has inverse-equivalent accuracy. We denote the computed result as $\what Z$ and it satisfies
$\|\what Z - M^{-1}K\| \le {\cal O} (\bu)  \|M^{-1}\| \|K\|.$  Furthermore the coefficient matrix $B =I+M^{-1}K$ is computed as
$fl(I+\what Z)$, which has an error term bounded by $ \bu  (1+\|\what Z\|)$ by  (\ref{eq:xpy}). Combining the two error terms together and denoting the final computed result $fl(I+\what Z)$ as $\what B$, we can write the total error as
\begin{equation}\label{eq:Bhat}
\what B 
=  I+M^{-1}K +E = B+E,
\;\;\mbox{ with } \;\; \|E\| \le {\cal O} (\bu) (1+ \|M^{-1}\| \|K\|).
\end{equation}
Similarly, the computed result of $M^{-1}b$, denoted by $\hat c:= fl(M^{-1}b) $ satisfies
\begin{equation}\label{eq:chat}
\|\hat c -c\| \le {\cal O} (\bu)  \|M^{-1}\| \|b\|.
\end{equation}

\begin{theorem}\label{thm:dense}
Let  $A=M+K$ with $A$ and $M$ being invertible and let $Ax=b$.  Assume that there is an inverse-equivalent algorithm for inverting $M$
so that the computed results of $B := I+M^{-1}K$ and $c :=M^{-1}b$, denoted by $\what B $ and $\hat c $ respectively,
satisfy (\ref{eq:Bhat}) and (\ref{eq:chat}).
Let $\hat x$ be the computed solution  to $\what B x=\hat c $  by a backward stable algorithm so that $\hat x$ satisfies
\begin{equation}\label{eq:backward}
 (\what B +F) \hat x=\hat c, \;\;\mbox{ with } \;\;\frac{\|F\|}{\|\what B\|} \le {\cal O} (\bu).
\end{equation}
Let $\delta:=(\|E\|+\|F\|)\|B^{-1}\|$ 
and assume that $\delta <1$. Then
\begin{equation}\label{eq:m1}
    \frac{\|\hat x - x \|}{\|A^{-1}\|\|b\|} \le {\cal O} (\bu) \frac{\kappa(B)}{1-\delta} \left( 4 +  \frac{\|K\| \| x\|}{\|b\|} \right).
\end{equation}
In particular, if $\|M^{-1}\|\|K\|  <1 $, then
\[
    \frac{\|\hat x - x \|}{\|A^{-1}\|\|b\|} \le   \frac{ {\cal O} (\bu)}{(1-\delta)(1-\|M^{-1}\| \|K\|)^2 }  .
\]
\end{theorem}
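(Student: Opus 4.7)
The plan is to carry out a straightforward perturbation analysis of the computed system $(\hat B + F)\hat x = \hat c$ relative to the exact preconditioned equation $Bx = c$, then convert the resulting bound to one involving $\|A^{-1}\|$ and $\kappa(B)$ by exploiting the algebraic identity $B = M^{-1}A$.

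First I would subtract the two equations to obtain the error equation
\[
B(\hat x - x) = (\hat c - c) - (E + F)\hat x .
\]
Taking norms and substituting $\|\hat x\| \le \|x\| + \|\hat x - x\|$, then rearranging using the hypothesis $\delta = (\|E\|+\|F\|)\|B^{-1}\| < 1$, yields
\[
(1-\delta)\|\hat x - x\| \;\le\; \|B^{-1}\|\bigl(\|\hat c - c\| + (\|E\|+\|F\|)\|x\|\bigr).
\]
The bounds (\ref{eq:chat}) and (\ref{eq:Bhat}) directly control $\|\hat c - c\|$ and $\|E\|$, while $\|F\| \le {\cal O}(\bu)\|\hat B\| \le {\cal O}(\bu)(1+\|M^{-1}\|\|K\|)$ follows from (\ref{eq:backward}) together with $\|\hat B\| \le \|B\|+\|E\|$. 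Substituting these produces an expression of the form $\frac{{\cal O}(\bu)}{1-\delta}\bigl[\|B^{-1}\|\|M^{-1}\|\|b\| + \|B^{-1}\|\|x\| + \|B^{-1}\|\|M^{-1}\|\|K\|\|x\|\bigr]$.

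The key algebraic step is that $B = M^{-1}A$, so $M^{-1} = BA^{-1}$ and hence
\[
\|B^{-1}\|\,\|M^{-1}\| \;\le\; \|B^{-1}\|\|B\|\|A^{-1}\| \;=\; \kappa(B)\|A^{-1}\|.
\]
Dividing the previous inequality by $\|A^{-1}\|\|b\|$ and applying this identity converts the first and third bracketed terms into $\kappa(B)$ and $\kappa(B)\|K\|\|x\|/\|b\|$; the middle term is handled similarly after using $\|x\| \le \|A^{-1}\|\|b\|$, and any residual constants are absorbed into the ``$4$'' of (\ref{eq:m1}). For the second displayed inequality, I would then apply the Neumann series bounds $\|B^{-1}\| \le (1-\|M^{-1}\|\|K\|)^{-1}$ and $\|B\| \le 1+\|M^{-1}\|\|K\|$, which give $\kappa(B) \le 2/(1-\|M^{-1}\|\|K\|)$, together with $\|K\|\|x\|/\|b\| \le \|K\|\|A^{-1}\| \le \|M^{-1}\|\|K\|/(1-\|M^{-1}\|\|K\|)$, to see that the bracketed factor is itself of order $(1-\|M^{-1}\|\|K\|)^{-1}$; multiplying the two factors produces the claimed $(1-\|M^{-1}\|\|K\|)^{-2}$ denominator.

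The main technical subtlety I expect is bookkeeping of the middle term $\|B^{-1}\|\|x\|/(\|A^{-1}\|\|b\|)$: bounding it cleanly by $\kappa(B)$ is not immediate unless $\|B\| \ge 1$, so some slack in the leading constant (the ``$4$'' rather than a sharper value) is needed to cover the regime where $B$ is not close to the identity. The remaining calculations are routine substitution and do not require any further structural ideas.
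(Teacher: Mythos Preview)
Your approach is essentially the paper's: a perturbation analysis of $(\hat B+F)\hat x=\hat c$ against $Bx=c$, followed by the key identity $M^{-1}=BA^{-1}$ to convert $\|B^{-1}\|\|M^{-1}\|$ into $\kappa(B)\|A^{-1}\|$, and then Neumann bounds for the second assertion. The paper organizes the first step slightly differently---it sets $\tilde B:=B+E+F$ and bounds $\|\tilde B^{-1}\|\le\|B^{-1}\|/(1-\delta)$ directly, writing $\hat x-x=\tilde B^{-1}(f-Ex-Fx)$---but this is equivalent to your substitute-and-rearrange manoeuvre.

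The one place where your proposal is not yet a proof is precisely the middle term you flag. You correctly note that $\|B^{-1}\|\|x\|/(\|A^{-1}\|\|b\|)\le\|B^{-1}\|$ cannot be bounded by $\kappa(B)$ unless $\|B\|\ge 1$, but your proposed remedy (``some slack in the leading constant'') does not actually work: if $\|B\|$ were allowed to be arbitrarily small, no fixed constant would absorb $\|B^{-1}\|/\kappa(B)=1/\|B\|$. The paper's fix is the elementary inequality
\[
1=\|I\|=\|B-M^{-1}K\|\le\|B\|+\|M^{-1}\|\|K\|,
\]
which it uses to replace the ``$1$'' in the bound $\|E\|\le{\cal O}(\bu)(1+\|M^{-1}\|\|K\|)$ by $\|B\|+\|M^{-1}\|\|K\|$. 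After that substitution every term carries a factor of $\|B\|$ or $\|M^{-1}\|\|K\|$, and the middle term splits cleanly into a $\kappa(B)$ piece and a $\kappa(B)\,\|K\|\|x\|/\|b\|$ piece; the ``$4$'' then arises from honest bookkeeping rather than from absorbing an unbounded quantity. With this one-line addition your argument is complete and matches the paper.
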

\begin{proof}
First, let  $f =\hat c -c$ or $\hat c = c+f$. Then
$\|f\| \le {\cal O} (\bu)  \|M^{-1}\| \|b\|$. Let $\widetilde B= \what B +F  = B +E +F $ and rewrite (\ref{eq:backward}) as $\wtilde B  \hat x =  c +f.$   From $(\|E\|+\|F\|)\|B^{-1}\|<1$, it follows that $\wtilde B$ is invertible and
\begin{equation}\label{eq:btilde}
\|\wtilde B^{-1}\| \le \frac{\| B^{-1}\|}{1-(\|E\|+\|F\|)\|B^{-1}\|} =
 \frac{\| B^{-1}\|}{1-\delta}
\end{equation}
We also have
\begin{equation}\label{eq:M}
\|M^{-1}\| = \|B A^{-1}\| \le \|B\| \|A^{-1}\|
\end{equation}
and
\begin{equation}\label{eq:2bb}
\| B^{-1}\| \|\what B \| \le \| B^{-1}\| (\|  B \|+\|E\|)  \le \| B^{-1}\| \|  B \|+ \delta \le 2 \|B^{-1}\| \|  B \|
\end{equation}
Furthermore, using
\begin{equation}\label{eq:one}
1=\|I\|=\|  B - M^{-1} K\| \le \|B\|+ \|M^{-1}\| \|K\|,
\end{equation}
we can bound  (\ref{eq:Bhat}) as
\begin{equation}\label{eq:E}
\|E\| \le   {\cal O} (\bu) ( \|B\|+ 2\|M^{-1}\| \|K\|) = {\cal O} (\bu) ( \|B\|+ \|M^{-1}\| \|K\|)
\end{equation}
where in  the last equality we have combined the coefficient $2$ of $\|M^{-1}\| \|K\|$ into ${\cal O} (\bu)$ (i.e. $2{\cal O} (\bu) =   {\cal O} (\bu)$ with our notation).
Now, clearly $Bx=c$ and then $ \wtilde B  x=  c +E x + Fx$. Combining this with  $\wtilde B \hat x=  c +f$, we have
\[
 \hat x - x =-\wtilde B^{-1}E   x -\wtilde B^{-1}F  x +\wtilde B^{-1} f.
 \]
Bounding the above and using (\ref{eq:E}), (\ref{eq:backward}), (\ref{eq:btilde}), (\ref{eq:M}), and (\ref{eq:2bb}), we have
\begin{eqnarray*}
\|\hat x - x \| &\le& \|\wtilde B^{-1}\| \|E \| \| x\| +\|\wtilde B^{-1}\| \|F\| \| x\| +\|\wtilde B^{-1}\| \| f\| \\
    &\le&  {\cal O} (\bu)\|\wtilde B^{-1}\| (\|B\|+\|M^{-1}\| \|K\|) \| x\| \\
    & & + {\cal O} (\bu)\|\wtilde B^{-1}\| \|\what B \| \| x\| +{\cal O} (\bu)\|\wtilde B^{-1}\| \|M^{-1}\| \|b\| \\
    &\le&  \frac{{\cal O} (\bu) }{1-\delta} \| B^{-1}\| \|B\| \| x\|+ \frac{{\cal O} (\bu) }{1-\delta} \| B^{-1}\| \|M^{-1}\| \|K\|  \| x\|  \\
    & & + \frac{{\cal O} (\bu) }{1-\delta}  \| B^{-1}\|\|\what B \| \| x\|+\frac{{\cal O} (\bu) }{1-\delta}  \| B^{-1}\|\|M^{-1}\| \|b\| \\
    &\le&  \frac{{\cal O} (\bu) }{1-\delta} ( \| B^{-1}\| \|B\| \| x\| + \| B^{-1}\|\|B\| \|A^{-1}\| \|K\| \| x\| \\
    & & + 2\| B^{-1}\| \|  B \| \| x\| +\| B^{-1}\| \|B\| \|A^{-1}\| \|b\|  ) \\
    &\le& \frac{{\cal O} (\bu) }{1-\delta} \| B^{-1}\| \|B \| \left(3\|x\| +  \|A^{-1}\| \|b\|\frac{\|K\| \| x\|}{\|b\|} + \|A^{-1}\| \|b\|\right) \\
     &\le&  \frac{{\cal O} (\bu)}{1-\delta} \kappa(B)\left( 4 +  \frac{\|K\| \| x\|}{\|b\|} \right) \|A^{-1}\|\|b\|.
\end{eqnarray*}
where we have used $\|x\| \le \|A^{-1}\|\|b\|$ in the last inequality. This proves (\ref{eq:m1}).

Finally, if  $\|M^{-1}\| \|K\|  <1$, then $B=I+M^{-1}K$ satisfies $\|B\|\le 1+ \|M^{-1}\| \|K\| $ and
$\|B^{-1}\| \le \frac{1}{1-\|M^{-1}\| \|K\|}$. Thus, it follows from $A^{-1}=B^{-1}M^{-1} $ that
\[
\frac{\|K\| \| x\|}{\|b\|}\le \|K\| \|A^{-1}\|\le \|K\| \|M^{-1}\|   \|B^{-1}\| \le \frac{\|K\| \|M^{-1}\| }{1-\|K\| \|M^{-1}\| }
\]
Thus
\[
{\kappa(B)} \left( 4 +  \frac{\|K\| \| x\|}{\|b\|} \right)\le \frac{1+ \|M^{-1}\| \|K\|}{1-\|M^{-1}\| \|K\|}\frac{4 - 3\|M^{-1}\| \|K\|}{ 1-\|M^{-1}\| \|K\| }
\le \frac{5}{ (1-\|M^{-1}\| \|K\|)^2 }
\]
where we have used $(1+ \|M^{-1}\| \|K\|)(4 - 3\|M^{-1}\| \|K\|)\le 4+\|M^{-1}\| \|K\| \le 5$.
Substituting this into (\ref{eq:m1}) and combing the factor 5 into the  ${\cal O} (\bu)$ term, we obtain the second bound of the theorem.
\end{proof}

The second bound of the theorem shows that we can obtain an inverse-equivalent solution if $\|M^{-1}\| \|K\|$ is bounded away from 1. Note that $\delta=(\|E\|+\|F\|)\|B^{-1}\|  \le {\cal O} (\bu) \|B^{-1}\|(1+ \|\what B\| +\|M^{-1}\| \|K\|) $ can be expected to be much smaller than 1 and hence the factor $(1-\delta)^{-1}$ is insignificant. When $\|M^{-1}\| \|K\|\ge 1$, only the first bound (\ref{eq:m1}) holds, which implies that the inverse-equivalent accuracy of the solution may deteriorate by a factor of $\kappa(B)$ or $ \frac{\|K\| \| x\|}{\|b\|} $. Such a dependence on $\kappa(B)$ and $K$ is expected however, as otherwise there would be   inverse-equivalent algorithm  for any $A$.

\subsection{Iterative Method for Preconditioned Systems}
For large scale problems, we are more interested in solving the preconditioned system $Bx=c$ by an iterative method. In general, the accuracy of the approximate solution obtained by an iterative method for $Ax=b$ is obviously limited by the accuracy of the matrix-vector multiplication $Av$. Namely, the residual $\|b-A \what x\|$ of an approximate solution $\what x$ computed in a floating point arithmetic is at best of order $\bu N \|A\| \|\what x\|$. A careful implementation,   possibly using residual replacements \cite{vdvy}, can ensure that the residual converges with this level of accuracy. Note that such a solution $\what x$ is backward stable (see \cite[Theorem 2.2]{demmel97}). We first briefly discuss some related results on the best accuracy that can be achieved.

Most iterative methods update approximate solutions and the corresponding residuals at each iteration by general formulas of the forms $x_k = x_{k-1}+q_k$ and $r_k = r_{k-1}-Aq_k$. In a convergent iteration, the best residual $\|b-A x_k\|$  one may obtain in finite precision is determined by the deviation between the computed (or updated) residual $r_k$, which is the one computed in an algorithm through the updating formula $r_k = r_{k-1}-Aq_k$, and the true residual defined as $b-A x_k$ for  $x_k$ that is computed through $x_k = x_{k-1}+q_k$.  This deviation phenomenon of the two kinds of residuals has been extensively studied; see \cite{svdvf,greenbaum,gutknecht,svdv,vdvy} and the
references cited therein.   Typically, the computed residuals $r_k$ of a convergent method maintains the theoretical convergence property (e.g. monotonicity)  even in a floating point arithmetic and can decrease arbitrarily close to 0, but the true residuals $b-A x_k$ will stagnate  at some level. This  deviation of the two residuals is due to the roundoff errors at each step, the most significant of which,  among others, is ${\cal O}(\bu) N\|A\|  \|q_k \|$ incurred in computing $Aq_k$, where $N$ is a constant associated with the error in $fl(Av)$ as defined in (\ref{eq:dot2}). Then, for $x_L$ at step $L$, the deviation is made up of the accumulated deviations over $L$ iterations, i.e. ${\cal O}(\bu) \sum_{k=1}^K N\|A\|  \|q_k \|$ which, since $x_L =fl( \sum_{k=1}^L q_k) $, is at least ${\cal O}(\bu) N\|A\|\|x_L \|$, the error incurred in computing $fl(A x_L)$.

Indeed, the accumulated roundoff errors ${\cal O}(\bu) \sum_{k=1}^L N \|A\|  \|q_k \|$, and hence the true residual, may be much larger than ${\cal O}(\bu) N \|A\|  \|x_L \|$ if there are large intermediate iterates $q_k$, which occur often in nonsymmetric solvers such as BiCG and CGS.
In that case, a residual replacement strategy \cite[Algorithm 3]{vdvy} has been developed that replaces the computed residual by the true residual at some selected steps so that its convergence property remains intact but the deviation of the two residuals is reset to 0.
Indeed, it is shown in \cite[Theorem 3.6]{vdvy} that if an iterative method for solving $Ax=b$ is implemented with the residual replacement and
the algorithm terminates at step $L$ with the computed residual satisfying
$\|{r}_{L} \| < \bu \|A\| \|x_L \|$, then  the true residual
$\| b-A x_L \|$
will be in the order of  
$\bu  N \| A \| \|{x}_{L}\|$.

Now, consider solving the preconditioned system (\ref{eq:newp}) by such an iterative method. To determine the accuracy that can be obtained from solving this well-conditioned system, we first analyze the accuracy of computing matrix-vector multiplication $B v$.

\begin{lemma}\label{lm:Bv}
Let $B$ be defined in (\ref{eq:newp}) and consider computing $Bv = v+ M^{-1}K v$ as in this expression for any $v\in \R^n$.  Assume that there is an inverse-equivalent algorithm for inverting $M$. If $M^{-1}K v$  is computed by the inverse-equivalent
algorithm and if we denote the final computed  result  of $Bv$ by $fl(Bv) $, then
\begin{equation}\label{eq:Bv}
\|fl(Bv) - Bv \| \le  {\cal O} (\bu) (1+\|M^{-1}\| \|K\|) \|v\|.
\end{equation}
\end{lemma}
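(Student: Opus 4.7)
The plan is to decompose the computation of $Bv = v + M^{-1}Kv$ into three clearly identifiable floating point steps and track the error introduced at each one, then combine them via the triangle inequality. Concretely, the algorithm as prescribed by the lemma first forms $y := fl(Kv)$, then applies the inverse-equivalent algorithm for $M$ to $y$ to get a computed vector $\hat w \approx M^{-1}y$, and finally returns $fl(v+\hat w)$. So my proof will introduce these three intermediate quantities, bound each error in turn, and sum the resulting contributions.

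First I will handle step one: the standard dot-product bound (\ref{eq:dot2}) gives $y = Kv + e_1$ with $\|e_1\|\le{\cal O}(\bu)\|K\|\|v\|$. For step two, the key is to invoke the hypothesis that inversion of $M$ is inverse-equivalent, i.e. Definition \ref{def:1} applied to the right-hand side $y$ yields $\|\hat w - M^{-1}y\|\le{\cal O}(\bu)\|M^{-1}\|\|y\|$. Since $\|y\|\le(1+{\cal O}(\bu))\|K\|\|v\|$, this simplifies to $\|\hat w - M^{-1}y\|\le{\cal O}(\bu)\|M^{-1}\|\|K\|\|v\|$. The step I have to be a bit careful about is bridging from $M^{-1}y$ to the exact target $M^{-1}Kv$: writing
\begin{equation*}
\hat w - M^{-1}Kv \;=\; (\hat w - M^{-1}y) \;+\; M^{-1}e_1,
\end{equation*}
the triangle inequality together with the bound on $\|e_1\|$ still gives $\|\hat w - M^{-1}Kv\|\le{\cal O}(\bu)\|M^{-1}\|\|K\|\|v\|$.

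For step three, the axpy bound (\ref{eq:xpy}) gives $fl(v+\hat w) = v+\hat w + e_3$ with $\|e_3\|\le\bu\,\|v+\hat w\|$. Here I will bound $\|\hat w\|\le\|M^{-1}Kv\|+{\cal O}(\bu)\|M^{-1}\|\|K\|\|v\|\le(1+{\cal O}(\bu))\|M^{-1}\|\|K\|\|v\|$ from the previous step, so that $\|v+\hat w\|\le(1+\|M^{-1}\|\|K\|)\|v\|(1+{\cal O}(\bu))$ and therefore $\|e_3\|\le{\cal O}(\bu)(1+\|M^{-1}\|\|K\|)\|v\|$. The final assembly is straightforward: since $fl(Bv) - Bv = (\hat w - M^{-1}Kv) + e_3$, adding the two bounds gives exactly the claim (\ref{eq:Bv}).

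The only subtlety—and the one place I will be explicit—is that the inverse-equivalence hypothesis provides an error proportional to $\|M^{-1}\|\|y\|$ rather than $\|M^{-1}\|\|Kv\|$, so one must verify that the extra factor $\|e_1\|$ is harmless; this is immediate because $\|M^{-1}\|\|e_1\|$ is already ${\cal O}(\bu)\|M^{-1}\|\|K\|\|v\|$, which can be absorbed. Everything else is routine triangle-inequality bookkeeping, and no hidden growth factor like $\kappa(M)$ appears because the inverse-equivalent algorithm for $M$ never generates a relative error in $\|\hat w\|$ of that magnitude.
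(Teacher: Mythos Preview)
Your proposal is correct and follows essentially the same approach as the paper's proof: the same three-step decomposition (compute $fl(Kv)$, apply the inverse-equivalent solve for $M$, add $v$), the same intermediate quantity $\hat w$, and the same final assembly $fl(Bv)-Bv=(\hat w-M^{-1}Kv)+e_3$. The only cosmetic difference is that the paper writes the entrywise bound $|e_3|\le\bu(|v|+|\hat w|)$ before passing to norms, whereas you work directly with $\|e_3\|\le\bu\|v+\hat w\|$; both lead to the same estimate.
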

\begin{proof}
Let $u:=Bv$ and denote the final computed  result $fl(Bv) $ by  $\hat u$.
To compute $Bv$,  we first compute $Kv$ to get $fl(Kv)= Kv+e_1$ with $|e_1| \le n \bu |K| |v|$. Then computing $M^{-1} fl(Kv)$ by the inverse-equivalent algorithm, the computed result, denoted by $\hat w$, satisfies
\begin{eqnarray*}
\|\hat w- M^{-1} fl(Kv)\| &\le& {\cal O} (\bu) \|M^{-1}\| \|fl(Kv)\| \\
  &\le& {\cal O} (\bu) \|M^{-1}\| (\|K\| \|v\|+ {\cal O} (\bu) \|K\| \|v\|) \\
   &\le& {\cal O} (\bu) \|M^{-1}\| \|K\| \|v\|.
\end{eqnarray*}
Let $e_2 = \hat w- M^{-1} Kv$. Then
\begin{eqnarray*}
\|e_2\| &=& \|\hat w- M^{-1} fl(Kv) +M^{-1} e_1\| \\
 &\le& {\cal O} (\bu) \|M^{-1}\| \|K\| \|v\| +{\cal O} (\bu)  \|M^{-1}\| \|K\| \|v\| \\
&=& {\cal O} (\bu) \|M^{-1}\| \|K\| \|v\|.
\end{eqnarray*}
Now, $\hat u = fl(v+\hat w) = v+\hat w +e_3$ with $|e_3| \le   \bu (|v|+|\hat w|).$ Then
\begin{eqnarray*}
\|e_3\| &\le&  {\cal O} (\bu)  \|v\|+ {\cal O} (\bu) (\|M^{-1} Kv\|+\|e_2\|)  \\
    &\le& {\cal O} (\bu) ( \|v\|+    \|M^{-1}\| \|K\| \|v\|+  {\cal O} (\bu)  \|M^{-1}\| \|K\| \|v\|)  \\
    &=&{\cal O} (\bu) (\|v\|+  \|M^{-1}\| \|K\| \|v\|).
\end{eqnarray*}
Finally, we have
$\hat u = v+ M^{-1} Kv +e_2 +e_3 = u +e_2 + e_3$ and
\[
\|\hat u - u \| \le \|e_2\| +\| e_3\| \le  {\cal O} (\bu) (1+\|M^{-1}\| \|K\|) \|v\|.
\]
\end{proof}

Now, when applying some convergent iterative method to the system (\ref{eq:newp}), using the residual replacement strategy if necessary, the true residual $\| c-B x_L \|$ is expected to converge to $ {\cal O} (\bu) (1+\|M^{-1}\| \|K\|) \|v\|$.  The next theorem demonstrate that such a solution has an inverse-equivalent accuracy. Note that since (\ref{eq:newp}) is  well-conditioned, most iterative methods should have fast convergence. In that case, the error accumulations are insignificant and the residual replacement is usually not necessary in practice.

\begin{theorem}\label{thm:iter}
Consider solving  (\ref{eq:newp}) by an iterative method where the matrix-vector product $Bv = v+ M^{-1}K v$ is computed by an inverse-equivalent algorithm for inverting $M$.  Assume that the iterative method produces an approximate solution $x_L$ with $\| c-B x_L \| \le  {\cal O} (\bu) (1+\|M^{-1}\| \|K\|) \|v\|$ and $\|b-Ax_L\| \le  \|b\|$. Then
\begin{eqnarray*}
\frac{\| x- x_L \|}{ \|A^{-1}\| \|b\|  }  &\le& {\cal O} (\bu)\kappa (B) \left( 1 +  \frac{\|K\| \|x_L\|}{\|b\|} \right) \\
   &\le&  {\cal O} (\bu)  \kappa (B) \left( 1 +  2 \|A^{-1}\| \|K\|  \right) .
\end{eqnarray*}
\end{theorem}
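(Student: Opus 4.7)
The plan is to use the standard error identity for the iterative solver and convert the residual bound in the preconditioned system to an error bound via $B^{-1}$, then rewrite $\|B^{-1}\|$ in terms of $\kappa(B)$ using the same auxiliary inequalities already established in the dense analysis.

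First I would observe that since the exact solution satisfies $Bx=c$, the error obeys $x - x_L = B^{-1}(c - B x_L)$, so the hypothesis on the final residual gives immediately
\[
  \|x - x_L\| \le {\cal O}(\bu)\,\|B^{-1}\|\,(1 + \|M^{-1}\|\|K\|)\,\|x_L\|.
\]
The rest of the proof is a purely algebraic manipulation to replace the prefactor $\|B^{-1}\|(1 + \|M^{-1}\|\|K\|)$ by $\kappa(B)(1 + 2\|A^{-1}\|\|K\|)$.

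Next I would recycle the two identities already proved in the preceding subsection: $\|M^{-1}\| \le \|B\|\|A^{-1}\|$ from (\ref{eq:M}), and $1 = \|B - M^{-1}K\| \le \|B\| + \|M^{-1}\|\|K\|$ from (\ref{eq:one}). Multiplying the latter by $\|B^{-1}\|$ and applying the former yields
\[
  \|B^{-1}\| \le \|B^{-1}\|\|B\| + \|B^{-1}\|\|M^{-1}\|\|K\| \le \kappa(B) + \kappa(B)\|A^{-1}\|\|K\|,
\]
and one further application of $\|B^{-1}\|\|M^{-1}\|\|K\| \le \kappa(B)\|A^{-1}\|\|K\|$ gives
\[
  \|B^{-1}\|(1 + \|M^{-1}\|\|K\|) \le \kappa(B)(1 + 2\|A^{-1}\|\|K\|).
\]
Substituting this back produces $\|x - x_L\| \le {\cal O}(\bu)\,\kappa(B)(1 + 2\|A^{-1}\|\|K\|)\,\|x_L\|$.

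Finally, the second hypothesis $\|b - A x_L\| \le \|b\|$ forces $\|A x_L\| \le 2\|b\|$ and hence $\|x_L\| \le 2\|A^{-1}\|\|b\|$. Using this bound on $\|x_L\|$ throughout and absorbing the factor $2$ into ${\cal O}(\bu)$ yields the second (coarser) bound of the theorem after division by $\|A^{-1}\|\|b\|$. For the sharper first bound I would only apply $\|x_L\| \le 2\|A^{-1}\|\|b\|$ to the ``$1$'' summand of $1 + 2\|A^{-1}\|\|K\|$, leaving the cross-term in the form $\kappa(B)\|A^{-1}\|\|K\|\|x_L\|$, so that dividing by $\|A^{-1}\|\|b\|$ gives the factor $\|K\|\|x_L\|/\|b\|$ as stated. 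The only step requiring any thought is the bound on $\|B^{-1}\|$: the naive estimate $\|B^{-1}\| \le \kappa(B)/\|B\|$ is useless when $\|B\| \ll 1$, which is precisely the regime of interest since $B = I + M^{-1}K$ may exhibit cancellation; the identity (\ref{eq:one}) is exactly what lets us trade $\|B^{-1}\|$ for $\kappa(B)$ plus a controlled correction proportional to $\|A^{-1}\|\|K\|$, and everything else is routine bookkeeping.
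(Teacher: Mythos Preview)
Your proposal is correct and follows essentially the same route as the paper: both use the error identity $x-x_L=B^{-1}(c-Bx_L)$, the inequality $1\le\|B\|+\|M^{-1}\|\|K\|$ from (\ref{eq:one}), the bound $\|M^{-1}\|\le\|B\|\|A^{-1}\|$ from (\ref{eq:M}), and the estimate $\|x_L\|\le 2\|A^{-1}\|\|b\|$ drawn from the hypothesis $\|b-Ax_L\|\le\|b\|$. The only cosmetic difference is ordering: the paper first replaces $1+\|M^{-1}\|\|K\|$ by $\|B\|+2\|M^{-1}\|\|K\|$ in the residual bound and then multiplies by $\|B^{-1}\|$, deriving the sharper inequality first and the coarser one second, whereas you bundle the factor $\|B^{-1}\|(1+\|M^{-1}\|\|K\|)$ and bound it directly, arriving at the coarser inequality first and then unpacking to recover the sharper one.
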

\begin{proof}
First we note that $x_L=x-A^{-1}(b-Ax_L)$ and then
\[
\|x_L\| \le \|x\| + \|A^{-1}\| \|b-Ax_L\| \le \|x\|+\|A^{-1}\| \|b\| \le 2 \|A^{-1}\| \|b\|.
\]
As in the proof of Theorem \ref{thm:dense}, we have (\ref{eq:one}). Then
\[
\| c-B x_L \| \le {\cal O} (\bu) (\|B\| \|x_L\| +  2 \|M^{-1}\| \|K\| \|x_L\|).
\]
We now bound $x-x_L = B^{-1} (c-B x_L)$ as
\begin{eqnarray*}
\| x- x_L \|  &\le&  {\cal O} (\bu) \| B^{-1}\|(   \|B\| \|x_L\|+  2\|M^{-1}\| \|K\| \|x_L\|) \\
    &\le& {\cal O} (\bu) \| B^{-1}\|(   2 \|B\|   \|A^{-1}\| \|b\|+ 2\|B\| \|A^{-1}\| \|K\| \|x_L\|) \\
    &=& {\cal O} (\bu) \|A^{-1}\| \|b\| \kappa (B) \left(   1 +  \frac{\|K\| \|x_L\|}{\|b\|} \right),
\end{eqnarray*}
where we have used (\ref{eq:M}) and combine the factor 2 into the ${\cal O} (\bu)$ term. This proves the first bound.  Bounding $\|x_L\|$ by $2 \|A^{-1}\| \|b\|$ again, 
we obtain the second bound of the theorem.
\end{proof}

The theorem shows that the inverse-equivalent accuracy is also achieved when using an iterative method for the preconditioned system.

\subsection{Accurate Inversion of Preconditioner}
The key requirement of the accurate preconditioning method is that there is an inverse-equivalent algorithm for inverting the preconditioner $M$. This is obviously the case if the inverse $M^{-1}$  is explicitly available. More generally, if a preconditioner $M$ has  an {\em accurate}  rank-revealing decomposition (RRD), then the solution to $Mx=b$ computed from the RRD is inverse-equivalent. The {\em accurate} rank-revealing decomposition is introduced by Demmel et. al. \cite{demmel99b} to accurately compute the singular value decomposition of a matrix.  Here is its definition.

\begin{definition}\label{arrf}
{\rm (See \cite{demmel99b})}
A factorization $A=XDY$ of $A\in \R^{m\times n}$ with $m\ge n$ is said to be rank-revealing if $X\in \R^{m\times r}$ and $Y\in \R^{r\times n}$ are  well-conditioned and $D\in \R^{r\times r}$ is diagonal and invertible, where $r \le \min\{m, n\}$. Consider an algorithm for computing a rank-revealing decomposition $A=XDY$ and let $\widehat X$, $\widehat D$, and $\widehat Y$ be the computed factors. We say $\widehat X \widehat D \widehat Y$ is an {\em accurate rank-revealing decomposition} of $A$  if $\widehat X$ and $\widehat Y$ are normwise accurate and $\widehat D$ is entrywise accurate, i.e.,
\begin{equation}\label{eq:arrf}
\frac{\|\widehat X -X\|}{\|X\|} \le  \bu p(n); \;\;
\frac{\|\widehat Y -Y\|}{\|Y\|} \le  \bu p(n); \;\;
\;\; \mbox{ and } \;
|\widehat D-D|  \le  \bu p(n)|D|,
\end{equation}
where $p(n)$ is a polynomial in $n$.
\end{definition}

As noted in \cite{demmel99b}, the precise meaning of ``well-conditioned" in the definition is not important as all related results involving this will be stated in terms of the condition numbers $\kappa(X)$ and $\kappa(Y)$, but in general, it refers to matrices with a  condition number within a modest bound dependent on the problem at hand.

For our purpose, we consider $n\times n$ invertible  matrices, i.e. $r=n$. Then, if $A$ has an accurate RRD, it is shown by Dopico and Molera \cite{domo12} that using it to solve linear systems gives an inverse-equivalent algorithm.   We state this result in the following theorem.

\begin{theorem}\label{thm:dopico}
{\rm (\cite[Theorem 4.2]{domo12})}
Let $\widehat X$, $\widehat D$, and $\widehat Y$ be the computed factors of a rank-revealing decomposition of $ A=XDY$  and assume that they satisfy (\ref{eq:arrf}).
Assume also that the systems $X s = b$ and $Y x = w$ are solved with a backward
stable algorithm that when applied to any linear system $Bz = c$,  computes a
solution $\hat z $ that satisfies $(B+\Delta B) \hat z = c$; with $\|\Delta B\| \le \bu q(n) \|B\|$
where $q(n)$ is a polynomial in $n$ such that $q(n) \ge 4 \sqrt{2} /(1-12\bu). $ Let
$g(n) := p(n)+q(n)+\bu p(n)q(n).$ Then, if $\hat x$ is the computed solution of $Ax = b$ through solving
\[
\widehat X y =b; \; \; \widehat D z= y; \; \mbox{ and } \; \widehat Y x = z,
\]
and if $\bu g(n)\kappa (Y)<1$ and $\bu g(n) (2+\bu g(n))\kappa(X) <1,$ then
\begin{eqnarray*}
 {\|\widehat x -x\|}  &\le&  \frac{\bu g(n) }{1-\bu g(n)\kappa (Y) } \left( \kappa (Y)+
   \frac{1+(2+\bu g(n)) \kappa(X) }{1-\bu g(n)(2+\bu g(n)) \kappa(X) }{\|A^{-1}\| \|b\|}  \right)\\
    &=&  \left( \bu g(n) + {\cal O}(\bu^2)\right) \max\{\kappa(X), \kappa(Y)\}  {\|A^{-1}\| \|b\|}.
\end{eqnarray*}
\end{theorem}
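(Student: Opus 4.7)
The plan is to chain the three backward-stable solves through the three factors and carefully balance operator-norm and entrywise bounds so that no $\kappa(X)\kappa(Y)$ or $\|D^{-1}\|$ factor appears in the final error. First I would merge each triangular solve's backward error with the accuracy of the corresponding computed factor: from $\what X=X+\Delta_1 X$ with $\|\Delta_1 X\|\le \bu p(n)\|X\|$ and $(\what X+\Delta X)\hat y=b$ with $\|\Delta X\|\le \bu q(n)\|\what X\|$, one deduces that $\hat y$ exactly satisfies $(X+E_X)\hat y=b$ with $\|E_X\|\le \bu g(n)\|X\|$; by the same reasoning $(Y+E_Y)\hat x=\hat z$ with $\|E_Y\|\le \bu g(n)\|Y\|$. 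For the diagonal stage, $fl(\hat y_i/\what D_{ii})=(\hat y_i/\what D_{ii})(1+\epsilon_i)$ with $|\epsilon_i|\le \bu$ combines with $|\what D-D|\le \bu p(n)|D|$ to give $\hat z=\wtilde D^{-1}\hat y$ for some diagonal $\wtilde D$ with $|\wtilde D-D|\le \bu g(n)|D|$ entrywise.

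Next I would compare $\hat x$ to the exact intermediate quantities $y=X^{-1}b$, $z=D^{-1}y$, $x=Y^{-1}z$. Subtracting $Xy=b$ from $(X+E_X)\hat y=b$ yields $\hat y-y=-X^{-1}E_X\hat y$; then $\hat z-z=\wtilde D^{-1}(\hat y-y)+(\wtilde D^{-1}-D^{-1})y$ together with $Y(\hat x-x)=(\hat z-z)-E_Y\hat x$ gives
\begin{equation*}
\hat x-x=-Y^{-1}\wtilde D^{-1}X^{-1}E_X\hat y+Y^{-1}(\wtilde D^{-1}-D^{-1})y-Y^{-1}E_Y\hat x.
\end{equation*}
The critical algebraic step is to split $\wtilde D^{-1}=D^{-1}+(\wtilde D^{-1}-D^{-1})$ inside the first term: using $Y^{-1}D^{-1}X^{-1}=A^{-1}$ this produces $-A^{-1}E_X\hat y$, plus an $O(\bu^2)$ cross-term that is dominated by the leading $\kappa(X)$-proportional contribution.

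I would then bound the three main pieces in turn. The first is at most $\bu g(n)\|A^{-1}\|\|X\|\|\hat y\|$, and since $\hat y=(X+E_X)^{-1}b$ gives $\|\hat y\|\le \|X^{-1}\|\|b\|/(1-\bu g(n)\kappa(X))$ (this is where the hypothesis $\bu g(n)(2+\bu g(n))\kappa(X)<1$ is used), yielding the $\kappa(X)$-proportional contribution that matches the second summand in the theorem. For the second term, the entrywise identity $|(\wtilde D^{-1}-D^{-1})y|_i\le \bu g(n)|z_i|/(1-\bu g(n))$ is immediate from the diagonal structure of $\wtilde D$ and $D$, so $\|(\wtilde D^{-1}-D^{-1})y\|\le \bu g(n)\|z\|/(1-\bu g(n))$; combined with $\|z\|=\|Yx\|\le \|Y\|\|A^{-1}\|\|b\|$ this gives the pure $\kappa(Y)$-contribution. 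The third term obeys $\|Y^{-1}E_Y\hat x\|\le \bu g(n)\kappa(Y)\|\hat x\|\le \bu g(n)\kappa(Y)(\|x\|+\|\hat x-x\|)$, and the global prefactor $1/(1-\bu g(n)\kappa(Y))$ in the theorem emerges after solving the resulting self-referential inequality for $\|\hat x-x\|$. Simplifying to leading order in $\bu$ gives $(\bu g(n)+O(\bu^2))\max\{\kappa(X),\kappa(Y)\}\|A^{-1}\|\|b\|$, which is the second, cleaner form displayed in the statement.

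The main obstacle is precisely preventing a $\kappa(X)\kappa(Y)$ or $\|D^{-1}\|$ factor from appearing. An operator-norm bound on $\wtilde D^{-1}-D^{-1}$ would introduce the harmful factor $\|D^{-1}\|$, and a naive backward-error analysis applied to $\wtilde X\wtilde D\wtilde Y-A$ would bring in $\kappa(X)\kappa(Y)$ through the product of factor norms. Both pitfalls are avoided by the structural moves above: handling $\wtilde D-D$ entrywise so that the $D$-perturbation acting on $y$ is entrywise comparable to $|z|$ rather than $\|D^{-1}\|\|y\|$, and peeling off $D^{-1}$ from $Y^{-1}\wtilde D^{-1}X^{-1}$ at exactly the right moment so that the triple product collapses to $A^{-1}$. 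Everything else, namely tracking the $O(\bu^2)$ cross-terms and matching the constants to the precise fractional form in the statement, is routine bookkeeping.
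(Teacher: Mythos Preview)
The paper does not supply a proof of this theorem; it is quoted verbatim from Dopico and Molera \cite[Theorem~4.2]{domo12} and stated without argument, so there is no ``paper's own proof'' to compare against here.

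That said, your sketch is correct and is essentially the argument of the cited source. You have identified the two structural moves that make the result work: (i) treating the diagonal perturbation entrywise, so that $(\wtilde D^{-1}-D^{-1})y$ is controlled by $\|z\|=\|D^{-1}y\|$ rather than by $\|D^{-1}\|\|y\|$; and (ii) recombining $Y^{-1}D^{-1}X^{-1}=A^{-1}$ inside the $X$-error term, which collapses the potentially enormous product $\|Y^{-1}\|\|D^{-1}\|\|X^{-1}\|$ to $\|A^{-1}\|$. Together with the self-referential inequality that yields the global prefactor $1/(1-\bu g(n)\kappa(Y))$, these are precisely the ingredients of the Dopico--Molera proof, and your diagnosis of the two pitfalls (a spurious $\kappa(X)\kappa(Y)$ or $\|D^{-1}\|$ factor) is exactly right. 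The one place your outline is slightly breezy is the origin of the factor $2+\bu g(n)$ multiplying $\kappa(X)$ in the sharp fractional bound: this arises when the cross-term $Y^{-1}(\wtilde D^{-1}-D^{-1})X^{-1}E_X\hat y$ is bounded and folded together with the main $A^{-1}E_X\hat y$ contribution, and getting the constant exactly right takes a little more care than ``routine bookkeeping'' suggests---but it has no effect on the leading-order form $(\bu g(n)+{\cal O}(\bu^2))\max\{\kappa(X),\kappa(Y)\}\|A^{-1}\|\|b\|$.
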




Several classes of matrices have been shown to have accurate RRD by Demmel et. al. \cite{demmel99b}, which include
graded matrices, total signed compound matrices such as acyclic matrices, Cauchy matrices, totally positive  matrices, diagonally scaled totally unimodular matrices, and matrices arising in certain simple finite element problems.
Diagonally dominant matrices  have also been shown to have accurate rank-revealing decomposition; see \cite{axy2,ddy14b,ye08}.  Specifically, in \cite[Algorithm 1]{ye08},  a variation of the Gaussian elimination is developed
to compute  an accurate $LDU$ factorization 
that is shown to be an accurate rank-revealing decomposition. The computational cost of this accurate $LDU$ algorithm is about the same as the standard Gaussian elimination.
Since discretizations of differential equations are often close to being diagonally dominant, we can construct a diagonally dominant preconditioner, for which the accurate  $LDU$ factorization provides an inverse-accurate algorithm. This will be used in our numerical examples in \S \ref{sec:example}.

%
We remark that if two matrices $A_1$ and $A_2$ both have accurate rank-revealing decomposition, then solving $A_1 A_2 x=b$ through $A_1 y=b$ and $A_2 x= y$ will produce an inverse-equivalent solution provided $\|A_1^{-1}\| \|A_2^{-1}\|/\|(A_1 A_2)^{-1}\|$ is a modest number; see \cite{ye17}. In particular, we may also consider a preconditioner that is a product of diagonally dominant matrices; see Examples 2 and 4 in \S \ref{sec:example}.

\section{Application to  Eigenvalue Problems}
\label{sec:eigen}
In this section we discuss an application of inverse-equivalent algorithms to computing a few smallest eigenvalues (in absolute value) of a matrix through accurate inverses.

In general, the relative accuracy of the computed smallest eigenvalue of a matrix in finite precision depends on the condition number $\kappa_2 (A)$. To illustrate, we consider an $n\times n$  symmetric positive definite matrix $A$. Let $\lambda_1 \le \lambda_2 \le \cdots \le \lambda_n$ be its eigenvalues.
A  backward stable algorithm computes an
approximate eigenvalue-eigenvector pair $(\widehat \lambda_i, \widehat x_i)$ with $\|\what x_i\|=1$ such that the residual  $ \|A \widehat x_i - \widehat \lambda_i \widehat x_i\|  $ is of order $\bu \|A\| $.
Then,  $|\widehat\lambda_i - \lambda_i | \le {\cal O} (\bu) \|A\|$ and hence
\begin{equation}\label{eq:relerror}
\frac{|\widehat\lambda_i - \lambda_i |}{\lambda_i }
\le {\cal O} (\bu) \frac{\lambda_n }{ \lambda_i}
\end{equation}
It follows that {\em larger eigenvalues}
(i.e.  $\lambda_i\approx \lambda_n$) are computed to the accuracy of machine precision, but
for {\em smaller eigenvalue} (i.e.  $\lambda_i\approx \lambda_1$),  a relative error of order ${\cal O} (\bu) \kappa (A)$ is expected.

Since the larger eigenvalues can be computed accurately, to compute a few smallest eigenvalues of an ill-conditioned matrix, we may compute correspondingly a few largest eigenvalues of $A^{-1}$. However, a difficulty  with this approach is that,  $A^{-1}$, or its multiplications on vectors, can not be computed accurately since $A$ is assumed to be ill-conditioned. For diagonally dominant matrices, this can be remedied by  using the accurate $LDU$ factorizations  \cite{ye08,ye09,ye17}.  Specifically, for large scale problems,  we  apply in  \cite{ye17} the Lanczos method to  $A^{-1}$ or simply use the inverse iteration and compute its largest eigenvalue $\mu_1=\lambda_1^{-1}$. At each iteration, the accurate $LDU$ factorizations is used to compute the matrix-vector product   $A^{-1}v$ (i.e. solving $Au=v$), which produces a solution  that would be equivalent to the one produced by multiplying the exact $A^{-1}$ with $v$. Hence the resulting residual error will be of order   $\bu \|A^{-1}\|_2 =\bu \mu_1,$ which implies  a relative error  for $\mu_1$ in the order of machine precision.   Finally $\lambda_1 = \mu_1^{-1}$ is computed accurately.

Now, consider a general symmetric matrix $A$ that can be preconditioned by  a diagonally dominant matrix. Then using the accurate preconditioning scheme of \S \ref{sec:inv}, we can form  $A^{-1}v$ (i.e. solving $Au=v$) with the inverse-equivalent accuracy. Then in the same way as discussed above, a few largest eigenvalues in absolute values can be computed accurately for $A^{-1}$, from which a few smallest eigenvalues in absolute values for $A$ are computed accurately.

The same discussion can also be extended to  nonsymmetric matrices  with the modification of the bound (\ref{eq:relerror}) as
\[
\frac{|\widehat\lambda_i - \lambda_i |}{|\lambda_i| }
\approx {\cal O} (\bu)\frac{1 }{ c_i} \frac{\|A\| }{ |\lambda_i|}
\]
where $c_i$ is the cosine of the angle between the left and the right eigenvectors of $A$ corresponding to $\lambda_i$; see \cite[Theorem 4.4]{demmel97}. The additional factor $1/c_i$ defines the sensitivity caused by nonnormality of the matrix, which is also studied through pseudospectra (see \cite{tref}). This factor is not changed with the inverse. Thus, when  $A^{-1}v$ is computed with inverse-equivalent accuracy, a few largest eigenvalues of  $A^{-1}$, and then the corresponding eigenvalues of $A$, are computed with an accuracy  independent of the condition number $\kappa (A)$. However, the accuracy is still expected to depend on $1/c_i$. See \cite[Theorem 6.3]{ddy14a} for some some related discussions for diagonally dominant matrices.


Finally, we discuss an application to discretizations of differential operators, which is a large source of ill-conditioned problems. For the discretization of differential eigenvalue problems, it is usually a few smallest eigenvalues that are of interest but their computed accuracy is reduced by the condition number of the discretization matrix. For operators involving high order differentiations such as biharmonic  operators, the matrix may easily become extremely ill-conditioned and then little accuracy may be expected of the computed eigenvalues; see \cite{bjtj97,bjtj99,ye17} and \S \ref{sec:example}.

In \cite{ye17}, we have used the accurate inverse approach to accurately compute a few smallest eigenvalues of a differential operator whose discretization matrix is diagonally dominant. For differential operators whose discretizations are not diagonally dominant, they can often be preconditioned by a diagonally dominant matrix.
For example, consider  the finite difference discretization of the convection-diffusion operator
\[
- \bigtriangleup u + \beta  u_x + \gamma  u_y    = \lambda u \quad
\mbox{ on } \quad (0,1)^2;
\]
with the homogeneous Dirichlet boundary condition. The discretization matrix $A$
is not diagonally dominant, but the discretization of the diffusion operator $- \bigtriangleup $ is. The convection operator is dominated by the diffusion operator, if $\beta, \gamma$ are not too large. Then, the discretization matrix can be well preconditioned by that of the diffusion operator. Hence, using accurate preconditioning,  we can accurately compute a few smallest eigenvalues of the convection-diffusion operator. See Examples 3  in \S \ref{sec:example} for some numerical results.

The convection-diffusion operator is just one example of differential operators whose discretization may be preconditioned by a diagonally dominant matrix. It will be interesting to study other differential operators with such properties but we leave it to  a future work.


\section{Numerical Examples}
\label{sec:example}

In this section, we present four numerical examples to demonstrate performance of the accurate preconditioning scheme. All tests were carried out on a PC in MATLAB (R2016b) with a machine precision $\bu \approx$ \texttt{2e-16}.
The first two examples concern solving linear systems and the last two examples use two similar matrices with known exact eigenvalues for the eigenvalue problems.

We consider linear systems arising in finite difference discretizations of some differential equations scaled so that the resulting matrix has integer entries. We  construct an integer solution $x$ so that $b=Ax$ can be computed exactly. Then $x$ is the exact solution. In our testing, we are interested in  systems with a random $b$, as this resembles practical situations where $b$ is usually the input data. By (\ref{eq:bcond}), a random $b$ is also likely to yield a system where an inverse-equivalent accurate solution is significantly more accurate than a backward stable solution. To construct a random integer vector $b$ with integer solution $x$, we   first construct a random vector $ {\tt b_0 =rand(n,1)}$ and set $x_0 = {\tt A\backslash b_0}$, from which we construct a scaled integer solution $x = {\tt round(x_0*1e8/norm(x_0, inf))}$, all in MATLAB functions. Then $b=Ax$ is computed exactly and is approximately a scaled random vector $b_0$.

We solve the systems by a preconditioned iterative method with the preconditioner solved by the usual Cholesky factorization and by the  accurate $LDU$ factorization (\cite[Algorithm 1]{ye08}). In all examples here, the preconditioners are symmetric; so  we actually compute the $LDL^T$ factorization which has about half of the cost. We compare the computed solutions  $\what x$ with respect to the errors
\[
\eta_{ie} := \frac{\|\what x- x\|_2}{\|A^{-1}\|_2 \|b\|_2}\;\;\mbox{ and }\;\;
\eta_{rel} := \frac{\|\what x- x\|_2}{\|x\|_2}.
\]
$\eta_{ie}$ measures the inverse-equivalent accuracy and $\eta_{rel}$ is the relative accuracy. They differ by a fixed ratio $\frac{\|x\|_2}{\|A^{-1}\|_2 \|b\|_2} \le 1$. For a backward stable solution $\what x$, $\eta_{rel}$ is approximately $\kappa_2 (A) \bu$ but for an
inverse-equivalent accurate solution, $\eta_{ie}$ is in the order of machine precision $\bu$.

\emph{Example 1.}
Consider the 1-dimensional convection-diffusion equation
\[
-  u''(x) -   u'(x)    = f(x) \quad
\mbox{ on } \quad (0, \gamma);
\]
with the Dirichlet boundary condition $u(0)=u(\gamma)=0$. Discretizing on a uniform grid of size $h=\gamma/(n+1)$ by the center difference scheme, we obtain
$A_n  =\frac{1}{h^2} T_n - \frac{1}{2h} K_n$,
where $T_n$ is the $n\times n$ tridiagonal matrix with diagonals being $2$ and off-diagonals being $-1$, and  $K_n$ is the skew-symmetric $n\times n$ tridiagonal matrix with $1$ on the superdiagonal above the main diagonal. To construct   $b$ and the exact solution $x=A_n^{-1}b$, we scale $A_n$ by $2\gamma^2/(n+1)$ and  use an integer value for $\gamma$ so that the resulting matrix
$2(n+1) T_n - \gamma   K_n$ has integer entries.  We then construct a random integer vector $b$ and the corresponding exact solution $x$ as discussed  at the beginning of this section.

$T_n$ is  diagonally dominant and has an accurate $LDL^T$ factorization. $A_n$ is neither symmetric nor diagonally dominant but, if $\gamma$ is not too large,  preconditioning by $ 2(n+1) T_n$   yields a well-conditioned matrix $B=I-\frac{h}{2} T_n^{-1} K_n$. We  solve the preconditioned system by  the GMRES method with the preconditioning equations solved in two ways: 1. using the Cholesky factorization of $T_n$, and 2. the accurate $LDU$ factorization of $T_n$. The GMRES is implemented with restart after 50 iterations and the stopping tolerance for relative residual is set as $\sqrt{n} \bu$. As a reference, we also solve the original system using MATLAB's division operator ${\tt A_n \backslash b}$. We compare the computed solutions $\what x$ by the three methods with respect to $\eta_{ie} $ and  $\eta_{rel}$.

In Table \ref{tbl:ex1a}, we present the results for a mildly ill-conditioned case with $n= 2^{13} -1 =8,191$ and a more ill-conditioned case with $n= 2^{19} -1 =524,287$. For each case of $n$, we test $\gamma=10^1,   10^2, \cdots, 10^6$, resulting in an $A_n$ that is increasingly not symmetric and  not diagonally dominant. In the table, in addition to the errors $\eta_{ie}$ and $\eta_{rel}$, we also present the condition numbers $\kappa_2 (A_n)$ and,  for the smaller $n$ case, $\kappa_2 (B)$ as well.
In the columns for accurate $LDU$ preconditioning, we also list $\rho := \|\gamma K_n\|_1 \|x\|_1 /\|b\|_1 $ which is a factor in the error bound for $\eta_{ie}$ by accurate preconditioning; see Theorem \ref{thm:iter}.

\begin{table}[hbt] \small
\begin{center}
\caption{\small {Example 1}: Accuracy for the three methods (${\tt A\backslash b}$, Cholesky Preconditioning, Accurate $LDU$ Preconditioning): $\eta_{ie} = \|\what x- x\|_2/(\|A^{-1}\|_2 \|b\|_2)$ and $\eta_{rel} = \|\what x- x\|_2/\|x\|_2$, and $\rho =\|\gamma K_n\|_1 \|x\|_1 /\|b\|_1$.
}
\begin{tabular}{cc|cc|ccc|ccc}
 \multicolumn{2}{c|}{} & \multicolumn{2}{c|}{ ${\tt A\backslash b}$} &  \multicolumn{3}{c|}{ Cholesky Precond.} &  \multicolumn{3}{c}{Accurate Precond.} \\
\hline
$\gamma$ & $\kappa_2 (A_n)$ &  $\eta_{ie} $ &  $\eta_{rel}$ &
$\eta_{ie}$ &  $\eta_{rel}$ & $\kappa_2 (B)$ &  $\eta_{ie}$  &  $\eta_{rel}$ & $\rho $
\\
\hline
\multicolumn{10}{c }{ $n= 2^{13} -1 =8,191$} \\ \hline
 1e1  	 & 1e7  & 3e-12  	 & 4e-12   &  2e-12   &   3e-12  &  8e0	 &  3e-15    &   4e-15  &   3e3 \\
 1e2  	 & 2e6  & 3e-11  	 & 4e-11   &  3e-13   &   3e-13  &  2e2	 	 &  4e-15    &   5e-15  &   4e3 \\
 1e3  	 & 2e5  & 3e-12  	 & 4e-12  &  3e-14   &   4e-14 	   &  7e3	 &  7e-15    &   9e-15  &   4e3 \\
  1e4  	 & 2e4  & 2e-17  	 & 3e-17    &  8e-15  	 &  1e-14   &   1e5 	 &  7e-15    &   9e-15  &   4e3 \\
 1e5  	 & 5e3  & 5e-16  	 & 6e-16    &  2e-14  	 &  3e-14   &   1e6 	 &  2e-14    &   3e-14  &   4e3 \\
 1e6  	 & 5e3  & 1e-15  	 & 2e-15    &  6e-13  	 &  8e-13   &   4e6 	 &  6e-13    &   8e-13  &   4e3 \\
  \hline
\multicolumn{10}{c }{ $n= 2^{19} -1 =524,287$} \\ \hline
 1e1  	 & 6e10  & 4e-10  	 & 5e-8 	 &  1e-9   &   2e-7  & - 	 	 &  2e-16    &   2e-14  &   2e3 \\
 1e2  	 & 7e9  & 7e-9  	 & 1e-7 	 &  8e-10   &   2e-8 	 & - 	  &  8e-15    &   2e-13  &   2e4 \\
 1e3  	 & 7e8  & 8e-9  	 & 2e-8 	 &  7e-10   &   2e-9 	 & - 	  &  1e-13    &   4e-13  &   1e5 \\
 1e4  	 & 7e7  & 2e-9  	 & 2e-9    &  1e-10  	 &  2e-10   &   - 	 &  5e-14    &   6e-14  &   3e5 \\
 1e5  	 & 7e6  & 6e-11  	 & 8e-11    &  1e-11  	 &  2e-11   &   - 	 &  6e-14    &   8e-14  &   3e5 \\
 1e6  	 & 7e5  & 1e-18  	 & 2e-18    &  1e-12  	 &  2e-12   &  -	 &  6e-14    &   8e-14  &   3e5 \\
\end{tabular}
\label{tbl:ex1a}
\end{center}
\end{table}

We observe that in all cases, the accurate preconditioning produces an inverse-equivalent accuracy $\eta_{ie} $  roughly in the order of machine precision, regardless of the condition number $\kappa_2 (A_n)$. Taking into consideration the results of Example 2 below, $\eta_{ie}$ appears to be proportional to $(1+\rho)\bu$ as indicated by the theory. For the first case where $\kappa_2 (B)$ is computed, $\eta_{ie} $  increases slightly with $\kappa_2 (B)$ but this effect appears to emerges only when $\kappa_2 (B) \ge 10^5$.  With $\eta_{ie} $ in the order of machine precision, the relative error  $\eta_{rel}$ is improved accordingly, which, in this case, is near the machine precision.
In contrast, the solutions by
${\tt A\backslash b}$ and by the Cholesky preconditioning have  relative errors $\eta_{rel}$ of order $\kappa_2 (A) \bu$ as expected, which determines a corresponding $\eta_{ie} $. With larger $\gamma$, $A_n$ becomes less ill-conditioned and the accuracy attained by ${\tt A\backslash b}$ increases. When $\gamma \ge 10^4$ (the first $n$ case) or $\gamma=10^6$ (the second $n$ case), it becomes more accurate than the one by the accurate preconditioning, but since $\kappa_2(B)$ is larger than $\kappa_2 (A_n)$ in those cases, the preconditioning is obviously not expected to be effective.

The results demonstrate that when $\kappa_2 (B)$ is not very large, the accurate preconditioning indeed produces inverse-equivalent accuracy while the  preconditioning solved by a backward stable algorithm  does not improve the solution accuracy at all.

\emph{Example 2.}
Let $A_n = (n+1)^4 T_n^2 + \gamma S_n$, where $T_n$ is as in Example 1,
$S_n$ is a  random sparse integer matrix constructed  using
$
{\tt S_n =floor(10*sprandn(n,n,0.001))} 
$
in MATLAB and $\gamma$ is an integer parameter. Note that $(n+1)^4 T_n^2$ is a finite difference discretization of 1-dimensional biharmonic operator ${d^4 u \over d x^4}$ with the
boundary condition $u= {d^2 u \over  d x^2} =0$ on a uniform mesh on $[0, 1]$ with the meshsize $1/(n+1)$.
For an integer value of $\gamma$, $A_n$ is an integer matrix and we construct a random integer vector $b$ and the corresponding exact solution $x$ as discussed at the beginning of this section.

If $|\gamma|$ is not too large, preconditioning with $(n+1)^4 T_n^2 $ results in a well-conditioned matrix $B=I+ \frac{\gamma}{(n+1)^4} T_n^{-2} S_n$. We  solve the preconditioned system by GMRES with two way of solving the preconditioning equations: 1. using the Cholesky factorization of $T_n^2$, and 2. using the accurate $LDU$ factorization of $T_n$.  The GMRES is implemented with restart after 50 iterations and the stopping tolerance for relative residual is set as $\sqrt{n} \bu$. Again, we also solve the original system using MATLAB's division operator ${\tt A\backslash b}$.   We compare the computed solutions $\what x$ by the three methods with respect to  $\eta_{ie} $ and   $\eta_{rel}$.

In Table \ref{tbl:ex2}, we present the testing results for $n= 2^{10} -1 =1,023$ and $n= 2^{14} -1 =16,383$. For these two cases respectively, $S_n$ has 1,008 and 257,572 nonzeros with $\|S_n\|_\infty = 75$ and $343$. For each of the $n$ value, we test $\gamma=10, -10^2,10^3, -10^4, 10^5$, $ -10^6, 10^7$.
We list in the table $\kappa_2 (A_n)$ and $\kappa_2 (B)$ and $\rho := \|\gamma S_n\|_1 \|x\|_1 /\|b\|_1 $, in addition to $\eta_{ie}$, $\eta_{rel}$.  

\begin{table}[hbt] \small
\begin{center}
\caption{\small {Example 2}: Accuracy for the three methods (${\tt A\backslash b}$, Cholesky Preconditioning, Accurate $LDU$ Preconditioning): $\eta_{ie} = \|\what x- x\|_2/(\|A^{-1}\|_2 \|b\|_2)$, $\eta_{rel} = \|\what x- x\|_2/\|x\|_2$, and $\rho =\|\gamma S_n\|_1 \|x\|_1 /\|b\|_1$.
}
\begin{tabular}{cc|cc|ccc|ccc}
 \multicolumn{2}{c|}{} & \multicolumn{2}{c|}{ ${\tt A\backslash b}$} & \multicolumn{3}{c|}{ Cholesky Precond.} &  \multicolumn{3}{c}{Accurate Precond.} \\
\hline
$\gamma$ & $\kappa_2 (A)$ &  $\eta_{ie} $ &  $\eta_{rel}$
&  $\eta_{ie}$ &  $\eta_{rel}$ & $\kappa_2 (B)$ &  $\eta_{ie}$  &  $\eta_{rel}$ & $\rho$
\\
\hline
\multicolumn{9}{c }{ $n= 2^{10} -1 =1,023$} \\ \hline
 1e1  	 & 2e11  & 3e-10  	 & 1e-7    &  4e-10  	 &  1e-7   &   3e0 	 &  5e-18    &   2e-15  &   2e-2 \\
 -1e2  	 & 2e11  & 8e-10  	 & 3e-7    &  4e-10  	 &  1e-7   &   9e1 	 &  6e-18    &   2e-15  &   2e-1 \\
 1e3  	 & 3e11  & 5e-11  	 & 3e-8    &  4e-10  	 &  2e-7   &   2e4 	 &  4e-18    &   2e-15  &   2e0 \\
 -1e4  	 & 4e10  & 2e-10  	 & 2e-8    &  2e-10  	 &  2e-8   &   2e5 	 &  6e-16    &   6e-14  &   1e1 \\
 1e5  	 & 9e9  & 2e-10  	 & 6e-9    &  2e-10  	 &  5e-9   &   5e6 	 &  7e-14    &   2e-12  &   1e2 \\
 -1e6  	 & 4e9  & 8e-11  	 & 1e-9    &  9e-11  	 &  1e-9   &   2e8 	 &  2e-11    &   3e-10  &   1e3 \\
 1e7  	 & 1e8  & 6e-13  	 & 2e-11    &  1e-10  	 &  4e-9   &   6e8 	 &  2e-11    &   6e-10  &   1e2 \\

\hline
\multicolumn{9}{c }{ $n= 2^{14} -1 =16,383$} \\ \hline
 1e1  	 & 3e16  & 8e-10  	 & 5e-2    &  1e-9  	 &  7e-2   &   5e1 	 &  6e-19    &   3e-11  &   1e-6 \\
 -1e2  	 & 2e15  & 1e-10  	 & 4e-4    &  1e-9  	 &  3e-3   &   2e2 	 &  1e-18    &   3e-12  &   1e-5 \\
 1e3  	 & 6e14  & 2e-10  	 & 3e-4    &  6e-10  	 &  8e-4   &   9e3 	 &  6e-19    &   9e-13  &   1e-4 \\
 -1e4  	 & 5e14  & 5e-11  	 & 7e-5    &  3e-10  	 &  5e-4   &   8e5 	 &  5e-19    &   8e-13  &   9e-4 \\
 1e5  	 & 8e13  & 1e-10  	 & 3e-5    &  3e-10  	 &  7e-5   &   1e7 	 &  9e-17    &   2e-11  &   9e-3 \\
 -1e6  	 & 4e13  & 9e-11  	 & 1e-5    &  1e-10  	 &  2e-5   &   5e8 	 &  2e-15    &   2e-10  &   9e-2 \\
 1e7  	 & 9e12  & 3e-11  	 & 1e-6    &  1e-10  	 &  3e-6   &   1e10 	 &  8e-14    &   2e-9  &   9e-1 \\

\end{tabular}
\label{tbl:ex2}
\end{center}
\end{table}

We observe that the accurate preconditioning produces an inverse-equivalent accuracy $\eta_{ie} $   in the order of machine precision, except when $|\gamma|$ is very large. Comparing with Example 1, $\eta_{ie}$ is about 3 order of magnitude smaller and this seems to be due to a corresponding decrease in  $1+\rho$. As $|\gamma|$ increases, the quality of preconditioning deteriorates. However, its effect on $\eta_{ie} $ emerges  only when $\kappa_2 (B)\ge 10^5$. From that point on,  $\eta_{ie} $ appears proportional to $\kappa_2 (B) (1+\rho)\bu$ as indicated by our theory.
Overall, similar behavior as in Example 1 is observed  for this random sparse matrix.

The results of these two examples are in  agreement with our error analysis (Theorem \ref{thm:iter}). The inverse-equivalent accuracy error $\eta_{ie}$ appears proportional to $\kappa_2 (B)(1+\rho)\bu$ although its  dependence on $\kappa_2 (B)$ may appear only when $\kappa_2 (B)$ is quite large. Indeed, its capability to produce an inverse-equivalent accuracy with large $\kappa_2(B)$  is rather surprising. This would allow  a broader application of the accurate preconditioning method than what our theory might suggest.

In the next two examples, we compute the smallest eigenvalue (in absolute value) of $A$  accurately by computing the corresponding largest eigenvalue of $A^{-1}$. We have used both   the Lanczos algorithm with full reorthogonalization and the power method (i.e. the  inverse iteration for $A$) and found the results to be similar. Below, we report the results obtained by the inverse iteration only. In applying $A^{-1}$ at each step of iteration, we solve $Au=v$ by  a preconditioned iterative method. We test solving the preconditioner  by the usual Cholesky factorization or by the  accurate $LDU$ factorization (\cite[Algorithm 1]{ye08}).  With the two ways of solving the preconditioning equations, we compare the final approximate eigenvalues obtained.

\emph{Example 3.}
Consider the eigenvalue problem for the same 1-dimensional convection-diffusion operator as in Example 1:
$
-  u''(x) -   u'(x)    = \lambda u(x)$
\mbox{ on } $(0, \gamma)
$
with $u(0)=u(\gamma)=0$. The eigenvalues of this operator are exactly known \cite[Theorem 1]{retr}:
\[
\lambda_i =  \frac{1}{4} + \frac{\pi^2 i^2}{\gamma^2},\;\;\mbox{ for }\;\;i=1,2, \cdots
\]
Discretizing on a mesh of size $h =\gamma/(n+1)$ as in Example 1, we obtain the same matrix
$A_n =\frac{1}{h^2} T_n - \frac{1}{2h} K_n$.

We approximate $\lambda_1 = \frac{1}{4} + \frac{\pi^2}{\gamma^2}$ by computing the smallest eigenvalue  of $A_n$ using the inverse iteration. At each iteration, we  solve $A_n u=v$ by  the GMRES method as preconditioned by $\frac{1}{h^2} T_n$ with two ways of solving the preconditioner $T_n$: 1. using the Cholesky factorization of $T_n$, and 2. the accurate $LDU$ factorization of $T_n$. We denote the computed smallest eigenvalues by $\mu_1^{chol}$ and $\mu_1^{aldu}$ respectively. The GMRES is implemented with restart after 50 iterations and the stopping tolerance for relative residual is set at $\sqrt{n} \bu$. The stopping tolerance for the eigenvalue-eigenvector residuals of the inverse iteration is also set at $\sqrt{n} \bu$. We use this very  stringent criterion to ensure as accurate results as possible. In all cases, the inverse iteration terminates with the residual satisfying the criterion.

In Table \ref{tbl:ex3}, we present the testing results for $h= 2^{-6}, 2^{-8}, \cdots, 2^{-24}$ and $\gamma=1$. We list the  computed eigenvalues $\mu_1^{chol}$ and $\mu_1^{aldu}$ and their relative errors. We observe that $\mu_1^{chol}$ initially converge quadratically as $h$. However, as $h$ decreases, the matrix becomes increasingly ill-conditioned and the roundoff errors associated with the standard Cholesky preconditioning increase and will dominate the discretization errors at some point (see \cite{ye17}). In this example, this occurs at $h\approx 1.5e-5$, after which further decreasing $h$  actually increases the error for $\mu_1^{chol}$. On the other hand,  the error for  $\mu_1^{aldu}$ decreases quadratically to the order of machine precision. Thus, the accurate preconditioning allows us to compute the smallest eigenvalue of the convection-diffusion operator, whose discretization is nonsymmetric and not diagonally dominant, to the full accuracy of the discretization, up to the machine precision.

\begin{table}[hbt] \small
\begin{center}
\caption{\small {Example 3}: approximation of
$\lambda_1 = \frac{1}{4} + \pi^2 =  10.11960440108936$ (
$\mu_1^{chol}$ - computed eigenvalue by Cholesky preconditioner;
$\mu_1^{aldu}$ - computed eigenvalue by accurate $LDU$ preconditioner.)
}
\begin{tabular}{c|c|c|c|c}
$h$ & 
$\mu_1^{chol}$ & $\frac{|\lambda_{1}-\mu_1^{chol}|}{\lambda_{1}}$
& $\mu_1^{aldu}$ & $\frac{|\lambda_{1}-\mu_1^{aldu}|}{\lambda_{1}}$ \\
\hline
 1.6e-2  	 & 	  10.11732544149765  	 & 	  2.3e-4  	 & 	 10.11732544149762  	 & 	  2.3e-4 \\
 3.9e-3  	 & 	  10.11946195350748  	 & 	  1.4e-5  	 & 	 10.11946195350759  	 & 	  1.4e-5 \\
 9.8e-4  	 & 	  10.11959549807000  	 & 	  8.8e-7  	 & 	 10.11959549806623  	 & 	  8.8e-7 \\
 2.4e-4  	 & 	  10.11960384467139  	 & 	  5.5e-8  	 & 	 10.11960384465017  	 & 	  5.5e-8 \\
 6.1e-5  	 & 	  10.11960436740018  	 & 	  3.3e-9  	 & 	 10.11960436631197  	 & 	  3.4e-9 \\
 1.5e-5  	 & 	  10.11960440025146  	 & 	  8.3e-11  	 & 	 10.11960439891543  	 & 	  2.1e-10 \\
 3.8e-6  	 & 	  10.11960357476229  	 & 	  8.2e-8  	 & 	 10.11960440095356  	 & 	  1.3e-11 \\
 9.5e-7  	 & 	  10.11959786966499  	 & 	  6.5e-7  	 & 	 10.11960440107954  	 & 	  9.7e-13 \\
 2.4e-7  	 & 	  10.11960179526253  	 & 	  2.6e-7  	 & 	 10.11960440108836  	 & 	  9.9e-14 \\
 6.0e-8  	 & 	  10.11996930306172  	 & 	  3.6e-5  	 & 	 10.11960440108905  	 & 	  3.0e-14 \\
\end{tabular}

\label{tbl:ex3}
\end{center}
\end{table}

\smallskip

\emph{Example 4:}
Consider computing the smallest eigenvalue of the 1-dimensional biharmonic problem:
$ {d^4 v \over dx^4} + \rho v =\lambda v $ on $[0, 1]$
with the natural boundary condition $v(0)=v''(0)=v(1)=v''(1)=0$.  Discretizing on a uniform mesh of size $h=1/(n+1)$ leads to  $A_n =\frac{1}{h^4}  T_{n}^2 + \rho I.$
where $T_n$ is the discretization of 1-dimensional Laplacian defined in Example 1.
The eigenvalues of $A_n $ are  $\lambda_{j,h} =\frac{1}{h^4} 16 \sin^4 (j \pi h/2) + \rho$ (see \cite[Lemma 6.1]{demmel97}). We consider $n=2^{16}-1=65,535$ for this example and  $\rho=\pm1,  \pm 10, \pm 10^2, \pm 10^3$. This results in an extremely ill-conditioned  $A_n$ with $\kappa_2 (A_n) \approx 10^{18}$ except in the case of $\rho = -10^2$ when $\kappa_2 (A_n) \approx 10^{20}$.  $A_n$ also becomes indefinite  when $\rho =-10^2$ or $ -10^3$.

We compute the smallest eigenvalue in absolute value, denoted by $\lambda_{\rm absmin}$, of $A_n$  by applying the inverse iteration to $A_n$. Note that this eigenvalue may not be $\lambda_{1,h}$ if $A_n$ is indefinite. In carrying out the inverse iterations, we  solve $A_n x=b$  by the CG (or MINRES if $\gamma<0$) method as preconditioned by $\frac{1}{h^4} T_n^2$ with two way of solving the preconditioner $T_n^2$: 1. using the Cholesky factorization of $T_n^2$, and 2. using accurate $LDU$ factorization of $T_n$. We denote the computed smallest eigenvalues in absolute value by $\mu_1^{chol}$ and $\mu_1^{aldu}$ respectively. The stopping tolerance for relative residual of CG or MINRES is set at $\sqrt{n} \bu$. The stopping tolerance for the eigenvalue-eigenvector residuals of the inverse iteration is also set at $\sqrt{n} \bu$. In our tests, the inverse iteration with the accurate $LDU$ factorization preconditioning produces a  residual satisfying the stopping criterion in all cases. The one with  the Cholesky factorization preconditioning, however, results in stagnating residuals mostly  around $10^{-11}$ that is slightly above the threshold. The latter can be attributed to the inaccuracy in the operator $A_n^{-1}$.

In Table \ref{tbl:ex4}, we present, for each case of $\rho$, the exact eigenvalue $\lambda_{\rm absmin}$,  the  computed eigenvalues $\mu_1^{chol}$ and $\mu_1^{aldu}$ and their relative errors. For all the cases of $\rho$ here, the preconditioned matrix $B = I+\rho h^2 T_n^{-2}$ is well conditioned with $\kappa (B) $ raging between $1$ and $40$. As a result, the accurate preconditioning produces $\mu_1^{aldu}$ that is accurate to the machine precision in all cases. The eigenvalues computed using the preconditioning with the Cholesky factorization $\mu_1^{chol}$ have in most cases  one digit of accuracy. In the case  $\rho =-10^2$, it has even the sign wrong. Again we see that  the accurate preconditioning accurately computes the smallest eigenvalue of this extremely ill-conditioned matrix, even when the matrix is indefinite.

\begin{table}[hbt] \small
\begin{center}
\caption{\small {Example 4}: approximation of the smallest eigenvalue in absolute value
$\lambda_{\rm absmin}$ (
$\mu_1^{chol}$ and $\mu_1^{aldu}$ - computed eigenvalue by Cholesky preconditioning and by accurate $LDU$ preconditioner respectively.
$e^{chol} :=\frac{|\lambda_{\rm absmin}-\mu_1^{chol}|}{|\lambda_{\rm absmin}|}$;
$e^{aldu} :=\frac{|\lambda_{\rm absmin}-\mu_1^{aldu}|}{|\lambda_{\rm absmin}|}$)
}
\begin{tabular}{c|c|c|c|c|c|c|c}
$\rho$ &  $\lambda_{\rm absmin}$ &
$\mu_1^{chol}$ & $e^{chol}$
& $\mu_1^{aldu}$ & $e^{aldu}$ \\
\hline
 1e0   &  98.409090996696  & 	 107.104718485058  &  9e-2  	 & 	 98.409090996693  	 & 	  3e-14 \\
 -1e0  &  96.409090996696  & 	 105.104718492797  &  9e-2  	 & 	 96.409090996693  	 & 	  3e-14\\
 1e1   &  107.409090996696  & 	 116.104718499209  &  8e-2  	 & 	 107.409090996693  	 & 	  3e-14\\
 -1e1  &  87.409090996696  & 	 96.104718508722  &  1e-1  	 & 	 87.409090996693  	 & 	  3e-14\\
 1e2   &  197.409090996696 &  	 206.104718499414  &  4e-2  	 & 	 197.409090996691  	 & 	  3e-14\\
 -1e2   &  -2.590909003304  & 	 6.104718530850  &  3e0  	 & 	 -2.590909003309  	 & 	  2e-12\\
 1e3  &  1097.40909099669  & 	 1106.10471864325  &  8e-3  	 & 	 1097.40909099668  	 & 	  1e-14\\
 -1e3  &  558.545454156402  & 	 716.309982554411  &  3e-1  	 & 	 558.545454156396  	 & 	  1e-14\\

\end{tabular}
\label{tbl:ex4}
\end{center}
\end{table}

\section{Concluding Remarks}
\label{sec:conclusion}
We have presented an accurate preconditioning method  to solve linear systems with inverse-equivalent accuracy. An error analysis is developed to demonstrate the accuracy that may be achieved by this approach. Numerical examples confirm the analysis but also show that the method works even when the quality of preconditioner is rather low. As an application, we use it to accurately compute the smallest eigenvalue of some differential operator discretizations that are indefinite or nonsymmetric.

For future works, it will be interesting to study a related perturbation theory  and to investigate what appears to be a very mild dependence of the accuracy on the condition number of the preconditioned matrix.  It will also be interesting to study whether our method can be used with preconditioners that are defined through their inverses, such as multilevel preconditioners \cite{xu} and sparse approximate inverse preconditioners \cite{BenziCullumTuma:SAINV,BenziTuma:RIF}.

\bigskip

{\bf Acknowledgement}: I would like to thank Prof. Jinchao Xu for some interesting discussions on multilevel preconditioners that have inspired this work. I would also like to thank Kasey Bray for many helpful comments on a draft of this paper.

%

\end{document}